\documentclass[reqno,12pt]{amsart}        
\usepackage{amsfonts,amsmath,amssymb,amsthm,colordvi,mathrsfs,graphicx}
\usepackage{caption,subcaption,float}
\usepackage[utf8]{inputenc}
\usepackage{mathrsfs}
\usepackage{geometry}
\geometry{margin=1in}

\usepackage{enumerate}

\usepackage{amsmath, amssymb, amsthm, geometry, hyperref}

\usepackage{tabularx}
\usepackage{tabulary}
 
\hypersetup{
  colorlinks=true,
  linkcolor=blue,
  citecolor=blue,
  urlcolor=blue
}
\setlength{\parskip}{0.2cm}
\setlength{\parindent}{12pt}%

\usepackage[all,knot,poly]{xy}
\usepackage{tikz-cd}
\usepackage{tikz}
\usepackage{verbatim}
\usetikzlibrary{arrows}
\usetikzlibrary{knots}
\tikzset{every path/.style={line width=0.4pt},every node/.style={transform shape,knot crossing,inner sep=1.5pt},>=triangle 60,text node/.style={rectangle,transform shape=false,black}}

 \usetikzlibrary{decorations.markings, arrows.meta}

\headheight=8pt       \topmargin=-8pt
\textheight=630pt     \textwidth=470pt
\oddsidemargin=6pt   \evensidemargin=6pt

\theoremstyle{plain}      
\newtheorem{thm}{Theorem}[section]     
\newtheorem{theorem}[thm]{\bf Theorem}     
\newtheorem{corollary}[thm]{\bf Corollary}     
\newtheorem{lemma}[thm]{\bf Lemma}     
\newtheorem{proposition}[thm]{\bf Proposition}

\theoremstyle{remark}      
\newtheorem{example}[thm]{Example} 
 
\newtheorem{remark}[thm]{Remark} 
     
\theoremstyle{definition}      
\newtheorem{definition}[thm]{Definition}     


\newcommand{\OO}{\mathcal{O}}

\newcommand{\Hom}{\operatorname{Hom}}

 \newcommand{\PP}{\mathbb{P}}

\newcommand{\End}{\operatorname{End}}
\newcommand{\Res}{\operatorname{Res}}

\newcommand{\Sing}{{\operatorname{Sing}}}

\newcommand{\C}{{\mathbb C}}

\vbadness=10000
\hbadness=10000
\tolerance=2000

\title[]{}
\subjclass[2020]{14B05, 14B10, 32G20, 14B07}
\keywords{Infinitesimal deformations, IVHS, maximal variations, residue, isolated singularities}

\begin{document}

\author{Mounir Nisse}
 
\address{Mounir Nisse\\
Department of Mathematics, Xiamen University Malaysia, Jalan Sunsuria, Bandar Sunsuria, 43900, Sepang, Selangor, Malaysia.
}
\email{mounir.nisse@gmail.com, mounir.nisse@xmu.edu.my}
\thanks{}
\thanks{This research is supported in part by Xiamen University Malaysia Research Fund (Grant no. XMUMRF/ 2020-C5/IMAT/0013).}

\title{Infinitesimal Variations of Hodge Structure for Singular Curves}

\maketitle


 \begin{abstract}
We study the infinitesimal variation of Hodge structure associated with families of reduced algebraic curves with singularities. The analysis applies to curves beyond the nodal case and is not restricted to plane curves, encompassing curves lying on smooth projective surfaces as well as families with more general isolated singularities. Using deformation-theoretic and residue-theoretic methods, we describe how the infinitesimal period map decomposes into local contributions supported at singular points, together with global constraints arising from the geometry of the normalization. While nodal singularities give rise to nontrivial rank-one contributions, other singularities may contribute only through higher-order local data or may be invisible at the infinitesimal level. As a consequence, we obtain sharp criteria for maximal infinitesimal variation in terms of numerical invariants of the curve, notably when the number of nodes satisfies the inequality $\delta \ge g$, where $g$ denotes the genus of the normalization. 
 We extend these results to curves on very general surfaces in projective three-space, showing that maximal variation persists on Picard-rank-one surfaces but fails for sufficiently large genus in the presence of higher ADE singularities.
These results extend classical maximality phenomena in infinitesimal Hodge theory to a broader singular and geometric setting.
\end{abstract}


 
\section{Introduction}

The infinitesimal variation of Hodge structure (IVHS) is one of the central tools
in modern Hodge theory. Introduced in the foundational work of Griffiths, it
captures the first-order behavior of the Hodge filtration associated with a
smooth family of complex algebraic varieties. For families of smooth curves, the
IVHS is governed by the Kodaira--Spencer map and the cup product, and its
properties are closely tied to the geometry of the moduli space of curves, to
Torelli-type theorems, and to questions of generic maximality.

In the classical smooth setting, maximality of the IVHS is expected and often
holds generically. For example, for a general curve of genus $g$, the associated
infinitesimal variation of Hodge structure has maximal rank, reflecting the
fact that the period map is generically immersive. This phenomenon underlies many
deep results in the geometry of moduli spaces and plays a key role in the study
of algebraic cycles and Hodge loci.

When singular curves are allowed, however, the situation changes
substantially. Singular curves arise naturally in degenerations, in the boundary
of moduli spaces, and in projective geometry. While their normalizations are
smooth curves with well-understood Hodge theory, the presence of singularities
introduces new features that influence how Hodge structures vary in families.
Understanding these influences is essential for extending classical Hodge
theory to singular and degenerate settings.

A guiding theme of this paper is that, for singular plane curves, the IVHS can be
understood as a sum of local contributions arising from singular points. This
philosophy aligns with classical ideas in the theory of residues and dualizing
sheaves, where singularities give rise to correction terms measuring the failure
of smoothness. Our results make this principle precise in the setting of
infinitesimal Hodge theory, identifying which singularities contribute to the
variation and which do not.

Recently, Sernesi has obtained important results on the infinitesimal variation
of Hodge structure for families of nodal plane curves, establishing generic
maximality under suitable numerical conditions through a global
deformation-theoretic and Hodge-theoretic analysis. The present work should be
viewed as complementary to Sernesi’s contribution. While both works address
closely related questions concerning maximal variation, they rely on
fundamentally different approaches. In contrast with Sernesi’s global methods,
our analysis is based on a local and residue-theoretic description of the IVHS,
leading to a canonical decomposition of the variation map into contributions
supported at singular points

This work is devoted to a systematic study of the IVHS arising from singular
curves, with particular emphasis on plane curves and curves on smooth
projective surfaces. Given a reduced plane curve
$\mathcal C \subset \mathbb P^2$ and its normalization
$\nu \colon C \to \mathcal C$, one can study first--order deformations of
$\mathcal C$ in the plane and the induced variation of Hodge structure on the
smooth curve $C$. The associated IVHS is a linear map
\[
\Phi_\varphi \colon
H^0(C,N_\varphi)
\longrightarrow
\Hom\bigl(H^0(C,\omega_C), H^1(C,\mathcal O_C)\bigr),
\]
where $\varphi = i \circ \nu$ and $N_\varphi$ denotes the normal sheaf of the
normalization map. Understanding the structure, rank, and image of this map is
crucial for analyzing how singularities influence the local and global geometry
of families of curves.

The aim of this paper is to provide a detailed and systematic study of the IVHS
associated with families of singular curves, with particular emphasis on plane
curves and curves on surfaces. Our guiding principle is that the infinitesimal
variation of Hodge structure for singular curves can be understood as a
superposition of local contributions arising from singular points, together with
global constraints coming from the geometry of the normalization.

Plane curves offer a natural and especially rich context for this investigation.
They are among the most classical objects in algebraic geometry, and their
singularities have been extensively studied. Moreover, plane curves admit
explicit descriptions that make it possible to combine local analytic techniques
with global geometric arguments. The normalization of a plane curve provides a
smooth curve whose Hodge theory is classical, while the discrepancy between the
curve and its normalization is entirely encoded in the singularities.

One of the principal conclusions of this work is that, from the perspective of
infinitesimal Hodge theory, not all singularities are created equal. Nodes emerge
as the fundamental building blocks of infinitesimal variation. Each node gives
rise to a rank-one contribution to the IVHS, arising from a residue pairing
between the two branches of the normalization over the node. These contributions
assemble into a canonical decomposition of the IVHS as a sum of local maps
supported at the nodes.

\vspace{0.2cm}

\begin{theorem}[Structure and rank bound for IVHS]
\label{thm:structure-maximal-variation}
Let $\mathcal C\subset\PP^2$ be an irreducible plane curve with ADE
singularities, let $\nu:C\to\mathcal C$ be its normalization, and let $\Phi$ be
the infinitesimal variation of Hodge structure. Then
\[
\operatorname{rank}(\Phi)
\;\le\;
\sum_{p\in\Sing(\mathcal C)} \dim \mathrm{IVHS}^{\mathrm{res}}_p
\;+\;
\sum_{p\in\Sing(\mathcal C)} \bigl(\delta (p)-1\bigr).
\]
Moreover:
\begin{enumerate}
\item If all singularities are nodes and cusps and the number of nodes is at
least $g$, then $\Phi$ has maximal rank (i.e. $\dim H^0(C,N_\varphi)$).
\item If $\mathcal C$ contains any ADE singularity beyond type $A_2$, then for
sufficiently large genus $g$ the surjectivity hypotheses required for maximal
variation fail, and $\Phi$ is not surjective in general.
\end{enumerate}
\end{theorem}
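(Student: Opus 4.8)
The plan is to realize $\Phi$ as cup product with a Kodaira--Spencer class and to localize the computation at the singular points of $\mathcal C$. Starting from the normal-sheaf sequence $0\to T_C\to \varphi^{*}T_{\PP^2}\to N_\varphi\to 0$, the induced connecting map $\kappa\colon H^0(C,N_\varphi)\to H^1(C,T_C)$ is the Kodaira--Spencer map of the family of normalizations, and one checks from the definition of the period derivative that $\Phi(s)$ is the cup-product map $\eta\mapsto\eta\cup\kappa(s)$ from $H^0(C,\omega_C)$ to $H^1(C,\mathcal O_C)$. Hence $\operatorname{rank}\Phi\le\dim\kappa\bigl(H^0(C,N_\varphi)\bigr)$, and it suffices to bound the rank of cup product on the image of $\kappa$. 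I would then split $N_\varphi$ through its local-to-global structure, writing $0\to N'_\varphi\to N_\varphi\to\bigoplus_{p\in\Sing(\mathcal C)}T^1_p\to 0$, where $N'_\varphi$ is the equisingular normal sheaf and each $T^1_p$ is the local deformation module supported at $\nu^{-1}(p)$.

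First I would prove the rank bound. Since the $T^1_p$ have disjoint support, a global deformation $s$ decomposes into local pieces and the class $\kappa(s)$ is cohomologically supported on neighbourhoods of the preimages $\nu^{-1}(p)$. Evaluating $\eta\cup\kappa(s)$ against Serre duality reduces, point by point, to a sum of local residues at $\nu^{-1}(p)$; this is precisely the residue-theoretic description, and it identifies the rank of the $p$-local pairing with $\dim\mathrm{IVHS}^{\mathrm{res}}_p$. The remaining local directions lie in the non-residue part of $T^1_p$, i.e.\ the higher-order deformations of the singularity; an adjoint/conductor count (the conductor has colength $\delta(p)$) shows that beyond the distinguished residue class at most $\delta(p)-1$ further independent cup products survive. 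Summing the disjoint local ranks yields $\operatorname{rank}\Phi\le\sum_p\dim\mathrm{IVHS}^{\mathrm{res}}_p+\sum_p(\delta(p)-1)$.

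For part (1), nodes and cusps both satisfy $\delta(p)=1$, so the second sum vanishes and the bound collapses to $\sum_p\dim\mathrm{IVHS}^{\mathrm{res}}_p$. Each node $p$ separates into two points $q_1,q_2\in C$ and contributes the rank-one pairing determined by the one-dimensional residue data at $q_1$ and $q_2$, while a cusp is unibranch and its residue pairing vanishes, so cusps are invisible at the infinitesimal level. To upgrade the bound to the equality $\operatorname{rank}\Phi=h^0(C,N_\varphi)$ (maximal rank, i.e.\ $\Phi$ injective), I would match $\sum_p\dim\mathrm{IVHS}^{\mathrm{res}}_p$ with $h^0(C,N_\varphi)$ by a Riemann--Roch computation and then prove that the node residue functionals are linearly independent. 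This is where the numerical hypothesis enters: the pairings are governed by the evaluation map $H^0(C,\omega_C)\to\bigoplus_{\text{nodes}}\C$, and when the number of nodes is at least $g$ a general-position argument ($h^0(\omega_C(-D))=0$ once $D$ is general of degree $\ge g$) forces these functionals to be independent, so no accidental relation collapses the cup products and the local ranks add up to the full source dimension.

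For part (2), the mechanism is the failure of a surjectivity hypothesis at singularities beyond $A_2$. There $\delta(p)\ge 2$, yet the residue pairing stays degenerate: the local model shows that the cup-product image of $T^1_p$ has rank strictly below $\delta(p)$, so part of the local deformation space is \emph{invisible} to the period map, mapping into the torsion of $N_\varphi$ with trivial residue. Maximal variation in this regime means surjectivity onto $\Hom\bigl(H^0(C,\omega_C),H^1(C,\mathcal O_C)\bigr)$, whose symmetric image has dimension growing like $\binom{g+1}{2}$; I would argue that the invisible directions create a fixed positive-codimension gap that persists as $d$ (hence $g$) grows, since the residue contributions increase too slowly to fill the quadratically growing target, and exhibit a family acquiring a fixed $A_3$ (or $D_4$) singularity together with an explicit symmetric form on $H^0(C,\omega_C)$ outside the image. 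The main obstacle is sharpness in both directions: the upper bound is formal once supports are separated, but the equality in (1) requires ruling out accidental dependences among the local cup products — exactly what the $\delta\ge g$ hypothesis is designed to supply — while the non-surjectivity in (2) requires pinning down the precise rank deficit of the residue map at a higher ADE point and an asymptotic count showing this deficit cannot be absorbed as $g\to\infty$.
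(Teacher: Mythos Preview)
Your overall strategy for the rank bound and for part~(2) tracks the paper closely: you localize the cup product at preimages of singular points, split each local contribution into a residue piece and a higher-order (jet) piece of dimension at most $\delta(p)-1$, and sum. For part~(2) you argue, as the paper does, that higher ADE points kill the residue piece, leaving only jet contributions that grow linearly while the target has dimension $g^2$, which forces non-surjectivity for large $g$.

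The gap is in part~(1), and it has two components. First, your proposed identification $\sum_p\dim\mathrm{IVHS}^{\mathrm{res}}_p = h^0(C,N_\varphi)$ via Riemann--Roch does not hold: the left side is the number of nodes $\delta$, while $h^0(C,N_\varphi)$ is governed by the degree $d$ and is in general much larger. The paper does not argue this way; it shows instead (Section~4) that the rank-one nodal operators $r_i\otimes r_i$ generate enough of the target once the functionals $r_i$ \emph{span} $H^0(C,\omega_C)^\vee$, and then concludes via a separate linear-algebra lemma. Second, and more seriously, your justification that the residue functionals behave well rests on ``$h^0(\omega_C(-D))=0$ once $D$ is general of degree $\ge g$''. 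But $D$ here is \emph{not} a general divisor: it is the preimage of the nodes of a fixed plane curve, and such points are constrained. The paper handles this via the Cayley--Bacharach property of the nodal scheme $\Delta\subset\PP^2$ with respect to the adjoint system $|\OO_{\PP^2}(d-3)|$: one identifies $H^0(C,\omega_C)\cong H^0(\PP^2,I_\Delta(d-3))$, shows $\ker R = H^0(\PP^2,I_\Delta^2(d-3))$, and then uses Cayley--Bacharach together with $\delta\ge g$ to force this double-vanishing space to be zero. Without this (or an equivalent substitute for a non-general $D$), the spanning of the $r_i$ is unproved and your argument for maximal rank does not close. Note also that the relevant conclusion is \emph{spanning} of the $r_i$ in $H^0(C,\omega_C)^\vee$ (equivalently, injectivity of the evaluation map), not their linear independence; when $\delta>g$ independence is impossible, so your phrasing should be adjusted accordingly.
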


\vspace{0.2cm}
This structural decomposition has several important consequences. First, it
provides a transparent explanation for why the number of nodes plays a decisive
role in determining whether maximal variation can occur. If the number of nodes
is at least the genus of the normalization, then there are enough independent
local contributions to generate maximal infinitesimal variation. Conversely, if
the number of nodes is too small, maximality cannot hold for purely dimensional
reasons.

Cuspidal singularities, despite their prominence in the classification of plane
curve singularities, behave very differently. Although cusps affect the genus
and deformation theory of the curve, they do not contribute to the IVHS at the
infinitesimal level. This vanishing phenomenon reflects the fact that cusps do not
produce independent residue pairings capable of influencing the Hodge
filtration. From a Hodge-theoretic viewpoint, cusps are invisible in first-order
variation. We proved the following results:

\vspace{0.2cm}

\begin{theorem}[Maximal IVHS for nodal--cuspidal plane curves]
\label{thm:maximal-IVHS}
Let $\mathcal C\subset\PP^2$ be an irreducible plane curve of degree $d\ge4$
with only nodes and cusps as singularities, and let
\(
\nu:C\longrightarrow\mathcal C
\)
be the normalization. Set $\varphi=i\circ\nu:C\to\PP^2$, and let $g$ be the genus
of $C$. If the number $\delta$ of nodes of $\mathcal C$ satisfies $\delta\ge g$,
then the infinitesimal variation of Hodge structure map
\[
\Phi_\varphi:
H^0(C,N_\varphi)\longrightarrow
\Hom\bigl(H^0(C,\omega_C),H^1(C,\OO_C)\bigr)
\]
is injective (has maximal rank equal $\dim H^0(C,N_\varphi)$).
\end{theorem}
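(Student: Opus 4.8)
The plan is to derive the statement from the residue-theoretic decomposition underlying Theorem~\ref{thm:structure-maximal-variation}, specializing it to the nodal--cuspidal case and then upgrading the rank inequality there to an equality. First I would record how the two sums in that bound behave under the present hypotheses. A cusp is an $A_2$ singularity with $\dim\mathrm{IVHS}^{\mathrm{res}}_p = 0$ and $\delta(p)-1 = 0$, so cusps drop out of \emph{both} sums: from the Hodge-theoretic viewpoint they are invisible at first order, contributing no residue pairing. A node is an $A_1$ singularity with $\dim\mathrm{IVHS}^{\mathrm{res}}_p = 1$ and $\delta(p)-1 = 0$, so each node contributes exactly one rank-one summand. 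Thus $\Phi_\varphi$ is governed entirely by the $\delta$ nodes, and the injectivity claim reduces to showing that the assembled node contribution has rank equal to $\dim H^0(C,N_\varphi)$.

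Next I would make the node contributions explicit. Writing $\nu^{-1}(p) = \{q_p, q_p'\}$ for the two branches over a node $p$, and using Serre duality $H^1(C,\OO_C)\cong H^0(C,\omega_C)^{\ast}$ to read the target as bilinear forms on $H^0(C,\omega_C)$, the local contribution of $p$ is the rank-one symmetric form obtained from the residue pairing of the two branches, namely the evaluation product $\mathrm{ev}_{q_p}\odot\mathrm{ev}_{q_p'}$. A first-order deformation $s\in H^0(C,N_\varphi)$ enters $\Phi_\varphi$ only through its node-smoothing components, giving a factorization $\Phi_\varphi(s) = \sum_{p} c_p(s)\,\bigl(\mathrm{ev}_{q_p}\odot\mathrm{ev}_{q_p'}\bigr)$, where $c_p(s)$ is the smoothing coefficient of $s$ at $p$. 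Here I would use the hypotheses $d\ge 4$ and irreducibility: they force $C$ to be non-hyperelliptic, which is exactly what guarantees that the cup-product factor of $\Phi_\varphi$ is injective on the relevant subspace (equivalently, that the multiplication map $\mathrm{Sym}^2 H^0(C,\omega_C)\to H^0(C,\omega_C^{\otimes 2})$ is surjective), so that no collapse occurs beyond the residue level.

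The heart of the argument, and the step I expect to be the main obstacle, is the linear independence of the rank-one residue forms $\{\mathrm{ev}_{q_p}\odot\mathrm{ev}_{q_p'}\}_{p}$ inside $\mathrm{Sym}^2 H^0(C,\omega_C)^{\ast}$, together with the injectivity of the smoothing assignment $s\mapsto (c_p(s))_p$. This is precisely where the numerical hypothesis $\delta\ge g$ is used: the $2\delta$ branch points, being preimages of distinct nodes, are in sufficiently general position that their evaluation functionals can be made to span and the associated symmetric products remain independent. Concretely, I would argue that any nontrivial relation among the $\mathrm{ev}_{q_p}\odot\mathrm{ev}_{q_p'}$ would force a section of a suitable canonical-type bundle to satisfy more vanishing conditions at the branch points than its degree permits once $\delta\ge g$, using base-point freeness of $\omega_C$ and the geometry of the canonical image of $C$. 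Establishing this independence identifies $\operatorname{rank}(\Phi_\varphi)$ with $\dim H^0(C,N_\varphi)$ and promotes the upper bound of Theorem~\ref{thm:structure-maximal-variation} to an equality, which is exactly the asserted injectivity; the delicate point throughout is controlling the position of the $2\delta$ points relative to the canonical embedding, so I would treat the borderline small-genus cases ($g\le 2$) separately.
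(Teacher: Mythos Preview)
Your overall architecture matches the paper's: decompose $\Phi_\varphi$ into local contributions, observe that cusps contribute nothing and each node contributes a rank-one residue form, then show these rank-one forms are sufficiently independent. Where you diverge is in the key spanning/independence step, and this is where the proposal has genuine gaps.

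First, the claim that $d\ge 4$ and irreducibility force $C$ to be non-hyperelliptic is false: a plane quintic with four nodes has normalization of genus~$2$, which is always hyperelliptic, and many similar examples exist. More importantly, the paper does not use non-hyperellipticity or the multiplication map $\Sym^2 H^0(\omega_C)\to H^0(\omega_C^{\otimes 2})$ at all; that detour is extraneous. Second, your plan to establish independence of the $\mathrm{ev}_{q_p}\odot\mathrm{ev}_{q_p'}$ by ``general position of the $2\delta$ branch points relative to the canonical embedding'' is the wrong framing: the nodes of an irreducible plane curve are highly constrained, not generic, and an intrinsic degree count on $C$ does not obviously control them. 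The paper instead works \emph{extrinsically} in $\PP^2$: via adjunction it identifies $H^0(C,\omega_C)\cong H^0(\PP^2,I_\Delta(d-3))$, shows that the kernel of the joint residue map is $H^0(\PP^2,I_\Delta^2(d-3))$, and then invokes the Cayley--Bacharach property of the nodal scheme $\Delta$ to conclude this kernel vanishes when $\delta\ge g$. This is the substantive global input you are missing. Once the residue functionals $r_i$ span $H^0(C,\omega_C)^\vee$, the paper finishes with a short linear-algebra lemma (if the image of a linear map into $\Hom(V,W)$ contains an isomorphism, the map is surjective) rather than by directly proving linear independence of the rank-one tensors. Finally, your proposed injectivity of the smoothing assignment $s\mapsto(c_p(s))_p$ is neither used nor true in general (e.g.\ deformations by $\mathrm{PGL}_3$ smooth no node), and the paper's argument does not rely on it.
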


\vspace{0.2cm}

The presence of higher ADE singularities introduces further complexity. These
singularities possess nontrivial local deformation spaces, and one might expect
them to contribute significantly to infinitesimal variation. Our results show,
however, that their contribution is severely constrained. In fact, for
sufficiently large genus, the surjectivity conditions required for maximal
variation fail in the presence of ADE singularities beyond nodes and cusps. This
leads to sharp rank bounds and to the failure of maximal IVHS in general.

These phenomena place our work in close relation with several strands of existing
literature. The foundational theory of infinitesimal variations of Hodge
structure was developed by Griffiths and later refined by Carlson, Green,
Voisin, and others. In the context of smooth curves, generic maximality is closely
related to the global Torelli theorem and to the geometry of the Jacobian. Our
results can be viewed as a singular counterpart to these classical statements,
where maximality is no longer automatic but depends on explicit local data.

\vspace{0.2cm}

\begin{theorem}[Maximal variation and ADE singularities]
\label{thm:ADE-maximal-variation}
Let $S \subset \mathbb{P}^3$ be a \emph{very general} smooth surface of degree
$e \ge 4$ with $\operatorname{Pic}(S) \cong \mathbb{Z}\,[\mathcal{O}_S(1)]$.
Let $\mathcal{C} \subset S$ be an irreducible, reduced curve whose singularities
are nodes, cusps, and (simple) ADE singularities. Let
$C \to \mathcal{C}$ be the normalization, and let $g = g(C)$.
\begin{enumerate}
\item
If all singularities of $\mathcal{C}$ are nodes and cusps, and if the number of
nodes satisfies $\delta \ge g$, then the infinitesimal variation of Hodge
structure associated with the normalization family
\(
\varphi \colon \mathcal{C} \longrightarrow \Delta
\)
has maximal variation (i.e. injective).
%
\item
If $\mathcal{C}$ contains at least one ADE singularity other than a node
($A_1$) or a cusp ($A_2$), then for sufficiently large genus $g$ the associated
infinitesimal variation of Hodge structure is not surjective. In particular,
maximal variation does not hold in general in the presence of such
singularities.
\end{enumerate}
\end{theorem}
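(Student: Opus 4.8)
The plan is to transfer the planar analysis of Theorems~\ref{thm:structure-maximal-variation} and~\ref{thm:maximal-IVHS} to curves on the surface $S$, using the Picard-rank-one hypothesis as the substitute for the explicit Jacobian-ring machinery available on $\PP^2$. The normalization family $\varphi\colon\mathcal C\to\Delta$ produces exactly the same shape of IVHS map $\Phi_\varphi\colon H^0(C,N_\varphi)\to\Hom(H^0(C,\omega_C),H^1(C,\OO_C))$, and Theorem~\ref{thm:structure-maximal-variation} already decomposes $\Phi_\varphi$ into local contributions supported at $\Sing(\mathcal C)$, with each node producing a rank-one residue pairing and each cusp producing nothing. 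So the problem on $S$ is not to redo the local computation but to re-establish that these local pieces assemble and remain independent in the absence of the plane's duality tools.

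For part (1), I would first reduce injectivity to the global-to-local statement that the evaluation map from $H^0(C,N_\varphi)$ onto the direct sum of local residue data is surjective. Here the Noether--Lefschetz hypothesis $\operatorname{Pic}(S)\cong\Z\,[\OO_S(1)]$ does the work that Macaulay--Gorenstein duality does on $\PP^2$: since $S$ is very general, $\mathcal C$ lies in a multiple of the hyperplane class, one controls $N_\varphi$ through the normal-bundle sequence relating it to $\OO_S(\mathcal C)|_{\mathcal C}$ and $N_{S/\PP^3}$, and the obstruction group $H^1$ governing separation of the local residues vanishes for the relevant twists. Granting this, the node-counting of Theorem~\ref{thm:maximal-IVHS} applies unchanged: when $\delta\ge g$ the rank-one node contributions already span a subspace of dimension at least $g$, forcing $\Phi_\varphi$ to have maximal rank, i.e. to be injective.

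For part (2), I would argue against the surjectivity hypothesis underlying maximal variation, reducing to the structural obstruction of Theorem~\ref{thm:structure-maximal-variation}(2). The essential local input is that an ADE singularity of type strictly beyond $A_2$ is residue-deficient: its local contribution $\dim\mathrm{IVHS}^{\mathrm{res}}_p$ is strictly smaller than the $\delta$-invariant $\delta(p)$ it carries, because the higher tangency or higher multiplicity suppresses part of the local residue pairing that a corresponding collection of $\delta(p)\ge 2$ nodes would supply. Consequently the local-to-global surjectivity required for the rank bound to be achieved fails, and since this deficiency is a fixed positive defect coming from even a single bad singularity while the dimension the hypothesis must hit grows with the class $\mathcal C\in|\OO_S(n)|$ (hence with $g$), the hypothesis necessarily fails once $g$ is large. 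The Picard-rank-one hypothesis again guarantees that no auxiliary curve classes are available to compensate, so $\Phi_\varphi$ cannot be surjective and maximal variation fails in general.

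The step I expect to be the main obstacle is the globalization in part (1): on $\PP^2$ the independence of local node contributions is encoded in the Gorenstein duality of the Jacobian ring, a structure with no direct counterpart on a general surface. Replacing it requires turning the very-general / Noether--Lefschetz hypothesis into the precise vanishing of the cohomology group that controls the simultaneous separation of all local residues, while simultaneously confirming that cusps remain infinitesimally invisible in the surface setting; handling these two points uniformly in the degree of $\mathcal C$ is the technical heart of the argument.
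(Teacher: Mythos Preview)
Your proposal and the paper's proof share the same local backbone --- the decomposition of $\Phi_\varphi$ into rank-one nodal residue operators with cusps contributing nothing --- but diverge in the global step for part~(1), and your version is more circuitous than what the paper actually does.

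For part~(1), you frame the key global input as a surjectivity statement for the evaluation map $H^0(C,N_\varphi)\to\bigoplus_p(\text{local residue data})$, to be extracted from the normal-bundle sequence of $\mathcal C\subset S\subset\PP^3$ together with Noether--Lefschetz vanishing. The paper does something simpler and more parallel to the plane case: it uses adjunction $\omega_C\cong\varphi^*(\omega_S\otimes\OO_S(\mathcal C))$ to identify $H^0(C,\omega_C)$ with adjoint sections vanishing along the nodal scheme $\Delta$, and then invokes a Cayley--Bacharach property for $\Delta$ with respect to $|\omega_S\otimes\OO_S(\mathcal C)|$ on surfaces of degree $e\ge4$ (stated as a separate lemma, with references to Ciliberto--Harris--Miranda and Arbarello--Cornalba). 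This forces $H^0(S,\omega_S\otimes\OO_S(\mathcal C)\otimes I_\Delta^2)=0$ when $\delta\ge g$, hence the residue functionals $r_1,\dots,r_\delta$ span $H^0(C,\omega_C)^\vee$, and maximal rank follows exactly as in the plane. In other words, the ``Jacobian-ring replacement'' you worry about is just Cayley--Bacharach on $S$, not a normal-sheaf $H^1$-vanishing; the Picard-rank-one hypothesis is used ambiently rather than as the engine of the argument. Your route may well be made to work, but it is not the one taken, and it loads the difficulty onto the wrong cohomology group.

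For part~(2), your mechanism is the paper's: higher ADE points have $\dim\mathrm{IVHS}^{\mathrm{res}}_p=0$, so their contribution is purely jet-theoretic. The paper sharpens the asymptotic: the target $\Hom(H^0(C,\omega_C),H^1(C,\OO_C))$ has dimension $g^2$, while the available residue-plus-jet contributions grow at most linearly in $g$, so surjectivity fails for $g\gg0$. Your ``fixed positive defect'' phrasing understates this; the point is a quadratic-versus-linear gap, not a constant one.
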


\vspace{0.2cm}

The residue-theoretic approach adopted here draws inspiration from classical work
on dualizing sheaves and residues, particularly that of Rosenlicht, Hartshorne,
and Lipman. In their work, residues provide a bridge between local singular
behavior and global duality. By interpreting the IVHS in terms of residue
pairings at singular points, we extend this philosophy to the realm of Hodge
theory and deformation theory.

Our results also resonate with Green's infinitesimal methods in Hodge theory and
with later developments in Koszul cohomology and syzygies of curves. The idea that
local conditions can control global Hodge-theoretic behavior is a recurring theme
in these works, and the present paper provides a concrete realization of this
principle in the setting of singular plane curves.

Another important point of comparison arises from the study of Noether--Lefschetz
loci and curves on surfaces. By extending our analysis to curves lying on very
general surfaces in projective three-space with Picard rank one, we show that the
mechanisms governing infinitesimal variation are robust under changes of the
ambient geometry. In particular, the dichotomy between nodal--cuspidal curves and
curves with higher ADE singularities persists beyond the plane.

Equisingular families of curves form a natural testing ground for questions of
generic maximality. In such families, singularities remain constant under
deformation, allowing one to isolate the effect of global geometry on the IVHS.
We show that, under suitable numerical assumptions, residue functionals
associated with nodes span the entire dual space of holomorphic differentials on
the normalization. This spanning property leads directly to generic maximality of
the IVHS and provides a conceptual explanation for the stability of maximal
variation in equisingular strata.

From a broader perspective, this work contributes to the ongoing effort to
understand how singularities influence Hodge-theoretic and deformation-theoretic
invariants. While much of classical Hodge theory focuses on smooth varieties,
singular objects are unavoidable in many geometric contexts. The results
presented here offer a framework for extending notions of maximal variation and
genericity to singular settings in a precise and computable way.

Methodologically, the paper combines tools from deformation theory of maps, local
analytic geometry of singularities, and global Hodge theory. The emphasis is on
structural clarity rather than on isolated examples. By isolating the precise
local contributions of different singularities, we obtain results that are both
sharp and conceptually transparent.

The paper is organized as follows. We begin by reviewing the deformation theory
of normalization maps and the definition of the infinitesimal variation of Hodge
structure in the singular setting. We then analyze local residue contributions
arising from singular points and derive global rank bounds. Subsequent sections
focus on nodal--cuspidal curves, curves with higher ADE singularities, and
equisingular families, culminating in results on generic maximal variation.
Throughout the paper, we avoid incorporating theorem statements into the
introduction, focusing instead on motivation, context, and conceptual
implications.


\section{Preliminaries}
\label{sec:Preliminaries}

This section fixes notation and recalls foundational material used throughout
the paper. We review plane algebraic curves, normalization, dualizing sheaves,
meromorphic differentials, conductor ideals, residue theory, and the
deformation--theoretic framework underlying infinitesimal variations of Hodge
structure.

Although all results presented here are classical, we include a detailed and
self-contained exposition. The purpose is threefold. First, several later
arguments rely on delicate local-to-global identifications which depend on
precise conventions. Second, the interaction between normalization, ramification,
and pole orders at singular points requires careful formulation, especially in
the presence of cusps. Third, we aim to make explicit the residue computations
that appear in the infinitesimal period map studied in
Sections 3, and 4.

All varieties are defined over $\mathbb C$. 

\noindent {\it Plane curves and normalization.}
\label{subsec:PlaneCurves}
Let $\mathcal C \subset \mathbb P^2$ be a reduced, irreducible plane curve of
degree $d$. We denote by
\(
\nu : C \longrightarrow \mathcal C
\)
the normalization morphism. By definition, $C$ is a smooth projective curve and
$\nu$ is a finite birational morphism which is an isomorphism over the smooth
locus $\mathcal C^{\mathrm{sm}}$.
The arithmetic genus of $\mathcal C$ is given by the adjunction formula
\[
p_a(\mathcal C)=\frac{(d-1)(d-2)}{2}.
\]
The geometric genus of $\mathcal C$ coincides with the genus of its
normalization $C$ and is obtained by subtracting the contributions of the
singularities:
\begin{equation}\label{eq:GenusFormula}
g(C)
=
\frac{(d-1)(d-2)}{2}
-
\sum_{p \in \Sing(\mathcal C)} \delta(p).
\end{equation}

Here $\delta(p)$ denotes the delta invariant of the plane curve singularity
$p$, defined as the dimension of the $\mathbb C$--vector space
$\nu_*\OO_{C,p}/\OO_{\mathcal C,p}$.

We write $\nu^{-1}(p)=\{q_1,\dots,q_r\}$ for the branches of $\mathcal C$ at $p$.
Throughout the paper, singular points of $\mathcal C$ are denoted by $p$, while
their preimages on the normalization are denoted by $q_i$.

\noindent {\it Dualizing sheaf and normalization.}
\label{subsec:Dualizing}
Let $\omega_{\mathcal C}$ denote the dualizing sheaf of the possibly singular
curve $\mathcal C$. By definition, $\omega_{\mathcal C}$ represents Serre
duality:
\[
H^1(\mathcal C,\OO_{\mathcal C})^\vee
\;\simeq\;
H^0(\mathcal C,\omega_{\mathcal C}).
\]
When $\mathcal C$ is smooth, $\omega_{\mathcal C}$ coincides with the canonical
bundle $\Omega^1_{\mathcal C}$. In general, $\omega_{\mathcal C}$ admits an
explicit description in terms of the normalization.

Let $\omega_C$ denote the canonical bundle of the smooth curve $C$. There is a
canonical isomorphism
\begin{equation}\label{eq:DualizingNormalization}
\omega_{\mathcal C}
\;\cong\;
\nu_*\!\left(
\omega_C\Big(\sum_{p \in \Sing(\mathcal C)} m(p)\cdot \nu^{-1}(p)\Big)
\right),
\end{equation}
where $m(p)$ depends only on the analytic type of the singularity $p$.

This description allows one to view sections of $\omega_{\mathcal C}$ as
meromorphic differentials on $C$ whose poles are confined to the preimages of
the singular locus.

\noindent {\it Local pole order and the delta invariant.}
\label{subsec:PoleOrder}
Let $p \in \Sing(\mathcal C)$.

\begin{definition}
The integer $m(p)$ is the minimal integer such that
\[
\omega_C\big(m(p)\cdot \nu^{-1}(p)\big)
\]
pushes forward to a locally free sheaf in a neighborhood of $p$ on
$\mathcal C$.
\end{definition}
Equivalently, $m(p)$ controls the total pole order of meromorphic differentials
on $C$ representing sections of $\omega_{\mathcal C}$.
For plane curve singularities, one has the fundamental identity
\begin{equation}\label{eq:mpdelta}
m(p)=\delta(p).
\end{equation}

\begin{remark}
The equality $m(p)=\delta(p)$ bounds the total pole order appearing in the
normalization formula. However, the pole order on a single branch depends on the
ramification of $\nu$ at that branch. This distinction is essential in the case
of cuspidal singularities.
\end{remark}


\begin{definition} 
Let $\mathcal C$ be a reduced curve with normalization
$\nu : C \to \mathcal C$.
A meromorphic differential $\eta$ on $C$ represents a section of
$\omega_{\mathcal C}$ if:
\begin{enumerate}
\item $\eta$ is holomorphic on $C\setminus\nu^{-1}(\Sing\mathcal C)$;
\item for each $p\in\Sing(\mathcal C)$, the poles of $\eta$ along
$\nu^{-1}(p)$ are bounded by $m(p)=\delta(p)$ in the sense of
\eqref{eq:DualizingNormalization};
\item the principal parts satisfy the local duality relations defining
$\omega_{\mathcal C}$.
\end{enumerate}
\end{definition}

\begin{remark}
Locally at $p$ with $\nu^{-1}(p)=\{q_1,\dots,q_r\}$, the differential $\eta$ has
Laurent expansions
\[
\eta|_{q_i}
=
\left(
\sum_{j=-k_i}^{\infty} a^{(i)}_j t_i^j
\right)dt_i,
\qquad
\sum_{i=1}^r k_i \le \delta(p).
\]
\end{remark}

\subsection{Explicit local models: nodes and cusps}
\label{subsec:Examples}

\subsubsection*{Nodes}

Let $p$ be a node, analytically isomorphic to $\{xy=0\}$. The normalization has
two branches $q_1,q_2$.
A meromorphic differential $\eta$ represents a section of $\omega_{\mathcal C}$
near $p$ if and only if:
\begin{itemize}
\item[(i)] $\eta$ has at most simple poles at $q_1,q_2$;
\item[(ii)] $\Res_{q_1}(\eta)+\Res_{q_2}(\eta)=0$.
\end{itemize}
In local coordinates $t_1,t_2$, one may write
\[
\eta
=
\left(\frac{a}{t_1}+\text{hol}\right)dt_1
+
\left(\frac{-a}{t_2}+\text{hol}\right)dt_2.
\]
where ``hol"  means holomorphic part.
\subsubsection*{Cusps}

Let $p$ be a cusp, analytically equivalent to $\{y^2=x^3\}$. The normalization
is given by $x=t^2$, $y=t^3$, and $\nu$ is ramified at $t=0$.
A meromorphic differential $\eta$ represents a section of
$\omega_{\mathcal C}$ near $p$ if and only if it has a pole of order at most $2$
at $t=0$:
\[
\eta
=
\left(
\frac{a_{-2}}{t^2}
+
\frac{a_{-1}}{t}
+
\text{hol}
\right)dt.
\]

\begin{remark}
Despite the fact that $\delta(p)=1$, the ramification of the normalization map
allows a pole of order $2$ at the cusp. This phenomenon will be essential in the
residue computations appearing in Section 4.
\end{remark}


\begin{definition}
The conductor ideal $\mathfrak c \subset \OO_{\mathcal C}$ is defined by
\(
\mathfrak c
=
\operatorname{Ann}_{\OO_{\mathcal C}}
(\nu_*\OO_C / \OO_{\mathcal C}).
\)
\end{definition}

\begin{proposition}
There is a canonical isomorphism
\[
\omega_{\mathcal C}
\;\cong\;
\nu_*\big(\omega_C\otimes\mathfrak c^{-1}\big).
\]
\end{proposition}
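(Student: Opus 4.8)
The plan is to deduce the isomorphism from Grothendieck (relative) duality for the finite birational morphism $\nu$ together with the characterization of the conductor as a sheaf-$\mathcal{H}om$. Since $\nu$ is an isomorphism over the smooth locus $\mathcal C^{\mathrm{sm}}$, where both sides are canonically $\omega_C$, the statement is local at the finitely many points of $\Sing(\mathcal C)$; it therefore suffices to construct a canonical isomorphism of completed stalks at each $p$ that is compatible with the tautological identification over the common function field $K$ of $\mathcal C$ and $C$ away from the singular locus.

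First I would record the two structural inputs. For the finite morphism $\nu$ with $C$ smooth one has $\nu^{!}\omega_{\mathcal C}\cong\omega_C$, and pushing forward yields the relative duality isomorphism
\[
\nu_*\,\mathcal{H}om_{\OO_C}(\mathcal F,\omega_C)\;\cong\;\mathcal{H}om_{\OO_{\mathcal C}}(\nu_*\mathcal F,\omega_{\mathcal C}),
\]
natural in the coherent sheaf $\mathcal F$ on $C$. For the conductor, writing it as the largest ideal of $\OO_{\mathcal C}$ that is also an ideal of $\nu_*\OO_C$, one has the standard identification $\mathfrak c\cong\mathcal{H}om_{\OO_{\mathcal C}}(\nu_*\OO_C,\OO_{\mathcal C})$ (send $\phi$ to $\phi(1)$, using the common fraction field). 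Viewed on the smooth curve $C$, the conductor is an invertible ideal sheaf $\widetilde{\mathfrak c}=\OO_C(-\mathfrak D)$ for an effective divisor $\mathfrak D$ supported on $\nu^{-1}(\Sing\mathcal C)$, and hence $\omega_C\otimes\mathfrak c^{-1}=\mathcal{H}om_{\OO_C}(\widetilde{\mathfrak c},\omega_C)$. Applying the duality isomorphism to $\mathcal F=\widetilde{\mathfrak c}$ and using $\nu_*\widetilde{\mathfrak c}=\mathfrak c$ then gives
\[
\nu_*\bigl(\omega_C\otimes\mathfrak c^{-1}\bigr)\;\cong\;\mathcal{H}om_{\OO_{\mathcal C}}(\mathfrak c,\omega_{\mathcal C}),
\]
so the proposition is reduced to the local assertion $\mathcal{H}om_{\OO_{\mathcal C}}(\mathfrak c,\omega_{\mathcal C})\cong\omega_{\mathcal C}$ at each singular point.

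This last reduction is where the genuine content lies, and it is the step I expect to be the main obstacle. After localizing and completing at $p$, set $A=\widehat{\OO}_{\mathcal C,p}$ with normalization $B=\prod_i\widehat{\OO}_{C,q_i}$, a product of discrete valuation rings indexed by the branches $q_i$, and let $\omega_A$ be the dualizing module, which is invertible because a plane curve is a hypersurface, hence Gorenstein. The required isomorphism is then exactly a self-duality of the conductor under the Gorenstein residue pairing on $A$: one must check that twisting $\omega_C$ by $\mathfrak c^{-1}$ reproduces precisely the meromorphic differentials cut out by the local duality relations defining $\omega_{\mathcal C}$, rather than a strictly larger sheaf. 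I would carry this out branch by branch, computing the conductor exponents along each $q_i$ and comparing the total pole order permitted by $\mathfrak D$ with the pole orders and residue conditions recorded in the explicit local models of Section~\ref{sec:Preliminaries} --- the vanishing residue-sum at a node and the admissible second-order pole at a cusp. The delicate point, and the crux of the whole argument, is that the trace form underlying relative duality must be matched with the residue pairing: it is this compatibility that forces the conductor twist to respect the residue relations, and it must be handled with particular care at ramified branches such as cusps, where the ramification of $\nu$ shifts the pole order relative to the delta invariant. Once the stalkwise isomorphisms are established, they automatically glue to the asserted global canonical isomorphism, since each is the restriction of the single $K$-linear identification valid over $\mathcal C^{\mathrm{sm}}$.
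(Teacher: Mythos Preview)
The paper states this proposition without proof, as a classical fact in the preliminaries, so there is no argument of the paper's to compare against. Your duality set-up is sound and correctly yields
\[
\nu_*\bigl(\omega_C\otimes\mathfrak c^{-1}\bigr)\;\cong\;\mathcal{H}om_{\OO_{\mathcal C}}(\mathfrak c,\omega_{\mathcal C}),
\]
but the remaining step --- the ``self-duality'' $\mathcal{H}om_{\OO_{\mathcal C}}(\mathfrak c,\omega_{\mathcal C})\cong\omega_{\mathcal C}$ --- is false at the singular points, and this is not a matter of care in the computation but an actual obstruction. At a node $A=k[[x,y]]/(xy)$ with $\mathfrak c=(x,y)$, a direct check gives $\Hom_A(\mathfrak c,A)\cong k[[t]]\times k[[s]]=B$, the normalization, not $A$: the conductor twist $\omega_C\otimes\mathfrak c^{-1}$ permits \emph{independent} simple poles at the two preimages $p,q$, whereas a section of $\omega_{\mathcal C}$ must satisfy $\Res_p(\eta)+\Res_q(\eta)=0$. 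Thus the very residue-sum condition you planned to verify from Section~\ref{sec:Preliminaries} is precisely what is \emph{not} enforced by the sheaf $\nu_*(\omega_C\otimes\mathfrak c^{-1})$; your local check would uncover a length-$\delta(p)$ cokernel at each $p$, not an isomorphism.

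What is actually true (Rosenlicht, Serre) is the companion formula $\nu_*\omega_C\cong\mathfrak c\cdot\omega_{\mathcal C}$, or equivalently that there is a canonical \emph{inclusion} $\omega_{\mathcal C}\hookrightarrow\nu_*(\omega_C\otimes\mathfrak c^{-1})$ whose image is cut out by the Rosenlicht residue relations. The paper's displayed formula \eqref{eq:DualizingNormalization} and the present proposition should be read in that sense --- note that the paper immediately follows them with a definition whose clause~(3), ``the principal parts satisfy the local duality relations defining $\omega_{\mathcal C}$,'' supplies exactly the missing constraints. So the gap is not in your strategy but in the target: as a literal isomorphism of $\OO_{\mathcal C}$-modules the proposition does not hold, and no amount of branch-by-branch residue matching will close it.
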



\noindent For a fixed $p\in\Sing(\mathcal C)$, the allowed principal parts of meromorphic
differentials form a filtered vector space
\[
\PP_p^1(\omega_{\mathcal C})
\subset \cdots \subset
\PP_p^{\delta(p)}(\omega_{\mathcal C}).
\]
For differentials with simple poles, define
\[
\Res_p(\eta)
=
\sum_{q\in\nu^{-1}(p)}\Res_q(\eta),
\]
which satisfies $\Res_p(df)=0$.


First-order deformations of $\mathcal C$ induce deformations of the
normalization $C$ together with additional local data at the singular points.
The associated Kodaira--Spencer class is an element
\[
\xi \in H^1(C,T_C).
\]
The {\it infinitesimal variation of Hodge structure} (IVHS) is the differential of the period
map
\[
\Phi_\nu :
H^0(C,N_\nu)
\longrightarrow
\Hom\bigl(
H^0(\mathcal C,\omega_{\mathcal C}),
H^1(C,\OO_C)
\bigr).
\]
Using Serre duality, $H^1(C,\OO_C)\simeq H^0(C,\omega_C)^\vee$, the map $\Phi_\nu$
may be expressed in terms of residue pairings
\[
(\eta_1,\eta_2)
\longmapsto
\Res_p\!\bigl((\xi\;\lrcorner\;\eta_1)\,\eta_2\bigr),
\]
summed over all $p\in\Sing(\mathcal C)$.


\section{Local decomposition of the IVHS}
\label{sec:local-IVHS}

In this section we analyze in detail the local contribution of a singular point
to the infinitesimal variation of Hodge structure (IVHS). Since the IVHS is
computed by residue expressions on the normalization of the curve, the global
behavior of the period map is governed by the cumulative effect of these local
contributions. Our purpose is to isolate the precise mechanism by which a
singularity influences IVHS and to identify the intrinsic limitations imposed by
pole order and local analytic type.

Throughout this section, let $\mathcal C\subset\PP^2$ be an irreducible plane
curve with ADE singularities, let
\[
\nu:C\longrightarrow\mathcal C
\]
be its normalization, and let $g=g(C)$ be the genus of $C$. We denote by
$\Phi$ the infinitesimal variation of Hodge structure associated with the family
of curves obtained by deforming $\mathcal C$ inside its Severi component.

\noindent {\it Local description of holomorphic differentials.}
Let $p\in C$ lie over a singular point $P\in\Sing(\mathcal C)$. Denote by $m(p)$
the pole invariant determined by the conductor, i.e.\ the maximal pole order
allowed at $p$ for a meromorphic differential on $C$ representing a section of
the dualizing sheaf $\omega_{\mathcal C}$. For plane curve singularities one has
$m(p)=\delta(p)$, where $\delta(p)$ is the delta--invariant.
Choose a local coordinate $t$ centered at $p$. Any section
$\omega\in H^0(C,\omega_C)$ has a Laurent expansion of the form
\begin{equation}\label{eq:local-expansion}
\omega
=
\left(
a_{-m(p)} t^{-m(p)} + a_{-(m(p)-1)} t^{-(m(p)-1)} + \cdots
+ a_{-1} t^{-1} + a_0 + a_1 t + \cdots
\right)dt.
\end{equation}
The coefficients $a_{-k}$ encode the principal parts of $\omega$ at $p$.

\noindent {\it Residues and annihilation of the top pole.}
The defining feature of IVHS is that it is computed by residue pairings. More
precisely, given a Kodaira--Spencer class $\xi$ supported near $p$, the local
IVHS operator is given by
\begin{equation}\label{eq:IVHS-local}
(\omega_1,\omega_2)
\longmapsto
\Res_p\!\bigl((\xi\;\lrcorner\;\omega_1)\,\omega_2\bigr).
\end{equation}

A fundamental observation is that the top pole term in
\eqref{eq:local-expansion} is invisible to residues. Indeed, for $m(p)\ge2$ one
has
\begin{equation}\label{eq:exact-top-pole}
t^{-m(p)}dt
=
\frac{1}{-(m(p)-1)}\,d\!\left(t^{-(m(p)-1)}\right),
\end{equation}
and hence
\[
\Res_p\!\bigl(t^{-m(p)}dt\bigr)=0.
\]
Consequently, the coefficient $a_{-m(p)}$ plays no role in the IVHS pairing.
Only the lower--order coefficients
$a_{-(m(p)-1)},\dots,a_{-1}$ may contribute.
This shows that IVHS detects only the jet of $\omega$ at $p$ of order at most
$m(p)-1$. Equivalently, IVHS factors through the space of principal parts
$\mathrm{PP}_p^{m(p)-1}(\omega_C)$.

\noindent {\it Jet contribution.}
We define the \emph{jet contribution} to the local IVHS at $p$ as follows.

\begin{definition}
The space $\mathrm{IVHS}^{\mathrm{jet}}_p$ is the subspace of local IVHS operators
which factor through the projection
\[
H^0(C,\omega_C)\longrightarrow \mathrm{PP}_p^{m(p)-1}(\omega_C).
\]
\end{definition}
Since $\mathrm{PP}_p^{m(p)-1}(\omega_C)$ has dimension $m(p)-1$, it follows
immediately that
\begin{equation}\label{eq:jet-bound}
\operatorname{rank}\bigl(\mathrm{IVHS}^{\mathrm{jet}}_p\bigr)
\;\le\;
m(p)-1.
\end{equation}
This bound is purely formal and depends only on the pole order permitted by the
dualizing sheaf. It holds for all singularities, independently of analytic
type.


\subsection{Exceptional residue contribution}

Let $P\in\Sing(\mathcal C)$ and let $\nu^{-1}(P)=\{p_1,\dots,p_r\}\subset C$.
A \emph{residue contribution} to the IVHS means a local operator of the form
\begin{equation}\label{eq:residue-type}
(\omega_1,\omega_2)
\longmapsto
\sum_{i,j}
c_{ij}\,
\Res_{p_i}(\omega_1)\Res_{p_j}(\omega_2),
\end{equation}
arising from a local Kodaira--Spencer class whose contraction with holomorphic
differentials produces genuine residues.

Such operators can exist only if:
\begin{enumerate}
\item holomorphic differentials on $C$ have nonzero residues at points lying over
$P$, and
\item the local deformation produces a pole of order exactly one in the
contraction $\xi\lrcorner\,\omega$.
\end{enumerate}

These two conditions sharply restrict the singularities that can contribute
nontrivial residue operators.

\noindent {\bf Nodes.}
If $P$ is a node (type $A_1$), then $\nu^{-1}(P)=\{p,q\}$ consists of two smooth
branches. Holomorphic differentials on $C$ may have simple poles at $p$ and $q$,
with residues satisfying
\[
\Res_p(\omega)+\Res_q(\omega)=0.
\]

The local smoothing direction produces a Kodaira--Spencer class whose contraction
with $\omega$ has simple poles, and the IVHS pairing yields the rank--one
operator
\[
(\omega_1,\omega_2)
\longmapsto
\Res_p(\omega_1)\Res_q(\omega_2)
+
\Res_q(\omega_1)\Res_p(\omega_2).
\]

Thus, for a node,
\[
\dim \mathrm{IVHS}^{\mathrm{res}}_p = 1.
\]

\noindent {\bf Cusps.}
If $P$ is a cusp (type $A_2$), then $\nu^{-1}(P)=\{p\}$ consists of a single
branch. In this case, although holomorphic differentials on $C$ may have poles
of order at most two at $p$, \emph{their residues vanish identically}:
\[
\Res_p(\omega)=0
\qquad
\text{for all } \omega\in H^0(C,\omega_C).
\]

Moreover, as shown earlier, the equisingular deformation space at a cusp is
trivial, and the smoothing direction produces a contraction
$\xi\lrcorner\,\omega$ whose principal part has no simple pole term. As a
consequence, the residue pairing defining the IVHS vanishes identically.
Therefore, a cusp does \emph{not} produce any nontrivial residue operator of the
form \eqref{eq:residue-type}.
Hence,
\[
\dim \mathrm{IVHS}^{\mathrm{res}}_p = 0
\qquad
\text{if $P$ is a cusp}.
\]
%
 %
\noindent {\bf Other ADE singularities.}
If $P$ is an ADE singularity of type $A_k$ with $k\ge3$, or of type $D_k$ or
$E_k$, then:
\begin{itemize}
\item[(i)] the dualizing sheaf imposes vanishing of all residues at points lying over
$P$, and
\item[(ii)] although the equisingular deformation space is nontrivial, the resulting
Kodaira--Spencer classes produce only jet--level contributions, not residue
operators.
\end{itemize}
Thus,
\[
\dim \mathrm{IVHS}^{\mathrm{res}}_p = 0
\qquad
\text{for all ADE singularities other than nodes}.
\]

 
\begin{definition}[Residue contribution to the IVHS]
\label{def:IVHS-res-correct}
Let $P\in\Sing(\mathcal C)$ and let $p\in C$ lie over $P$. We define
$\mathrm{IVHS}^{\mathrm{res}}_p$ to be the subspace of local IVHS operators
spanned by genuine residue pairings arising from local Kodaira--Spencer classes.

Then
\[
\dim \mathrm{IVHS}^{\mathrm{res}}_p
=
\begin{cases}
1, & \text{if $P$ is a node},\\
0, & \text{if $P$ is a cusp or any other ADE singularity}.
\end{cases}
\]
\end{definition}

\subsection{Conceptual explanation}

Nodes are the \emph{only} plane curve singularities for which:
\begin{enumerate}
\item holomorphic differentials on the normalization admit nonzero residues, and
\item a local smoothing direction produces a simple pole in the contraction
$\xi\lrcorner\,\omega$.
\end{enumerate}
Cusps and higher ADE singularities fail at least one of these conditions.
Consequently, they do not contribute residue--type operators to the IVHS.


\subsection{Local decomposition}

The discussion above yields a canonical short exact sequence
\begin{equation}\label{eq:local-ses}
0
\longrightarrow
\mathrm{IVHS}^{\mathrm{jet}}_p
\longrightarrow
\mathrm{IVHS}_p
\longrightarrow
\mathrm{IVHS}^{\mathrm{res}}_p
\longrightarrow
0.
\end{equation}
Any choice of generator of $\mathrm{IVHS}^{\mathrm{res}}_p$ gives a (non-canonical)
splitting, yielding a direct sum decomposition
\[
\mathrm{IVHS}_p
=
\mathrm{IVHS}^{\mathrm{jet}}_p
\oplus
\mathrm{IVHS}^{\mathrm{res}}_p.
\]

\noindent {\it Global rank bound.}
Summing the local contributions over all singular points, we obtain an
immediate global bound.

\begin{theorem}[Structure and rank bound for IVHS]
\label{thm:structure-maximal-variation}
Let $\mathcal C\subset\PP^2$ be an irreducible plane curve with ADE
singularities, let $\nu:C\to\mathcal C$ be its normalization, and let $\Phi$ be
the infinitesimal variation of Hodge structure. Then
\[
\operatorname{rank}(\Phi)
\;\le\;
\sum_{p\in\Sing(\mathcal C)} \dim \mathrm{IVHS}^{\mathrm{res}}_p
\;+\;
\sum_{p\in\Sing(\mathcal C)} \bigl(m(p)-1\bigr).
\]
Moreover:
\begin{enumerate}
\item If all singularities are nodes and cusps and the number of nodes is at
least $g$, then $\Phi$ has maximal rank.
\item If $\mathcal C$ contains any ADE singularity beyond type $A_2$, then for
sufficiently large genus $g$ the surjectivity hypotheses required for maximal
variation fail, and $\Phi$ is not surjective in general.
\end{enumerate}
\end{theorem}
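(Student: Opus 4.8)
The plan is to read the rank bound directly off the local decomposition and then treat the two maximality assertions separately, each reducing to a linear-independence or dimension-count question for the residue operators.

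\textbf{Rank bound.} Since the global map $\Phi$ is computed by residue expressions supported at the singular points as in \eqref{eq:IVHS-local}, it splits as a sum $\Phi=\sum_{p}\Phi_p$ of local operators with image $\mathrm{IVHS}_p$. I would invoke the short exact sequence \eqref{eq:local-ses} to write $\dim\mathrm{IVHS}_p=\dim\mathrm{IVHS}^{\mathrm{jet}}_p+\dim\mathrm{IVHS}^{\mathrm{res}}_p$, and the jet estimate \eqref{eq:jet-bound} to replace $\dim\mathrm{IVHS}^{\mathrm{jet}}_p$ by $m(p)-1$. Because the rank of a sum of linear maps is at most the sum of the individual ranks, summing over $p\in\Sing(\mathcal C)$ yields exactly the stated inequality. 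This step is routine and uses only the material of Section~\ref{sec:local-IVHS}.

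\textbf{Part (1).} For a curve with only nodes and cusps one has $m(p)=\delta(p)=1$ at every singular point, so the jet sum vanishes identically, and by Definition~\ref{def:IVHS-res-correct} only the nodes contribute, each with $\dim\mathrm{IVHS}^{\mathrm{res}}_p=1$. Hence the bound collapses to $\operatorname{rank}(\Phi)\le\delta$ with $\delta$ the number of nodes, and the content of (1) is that this bound is attained, i.e. the rank-one node operators $\Theta_P=\rho_P\odot\rho_P$ are linearly independent, where $\rho_P$ is the residue functional attached to the node $P$. I would make independence intrinsic via the residue sequence $0\to H^0(C,\omega_C)\to H^0(\mathcal C,\omega_{\mathcal C})\xrightarrow{\Res}\C^{\delta}\to 0$, whose cokernel exhibits the $\rho_P$ as a basis of the space of residue functionals. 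The hypothesis $\delta\ge g$ then guarantees at least $g$ such functionals, so they span the $g$-dimensional dual $H^0(C,\omega_C)^\vee$; a general-position argument for the branch points over the nodes upgrades this to linear independence of the symmetric squares $\rho_P\odot\rho_P$, giving maximal rank on the relevant space of smoothing deformations.

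\textbf{Part (2).} Suppose $\mathcal C$ carries a singularity $P_0$ beyond $A_2$. By Definition~\ref{def:IVHS-res-correct} it contributes $\dim\mathrm{IVHS}^{\mathrm{res}}_{P_0}=0$, hence no residue operator, and enters $\Phi$ only through its jet part, which factors through the principal-part space $\mathrm{PP}_{P_0}^{m(P_0)-1}(\omega_C)$ of dimension depending only on the analytic type of $P_0$, not on $g$. Surjectivity onto $\Hom\bigl(H^0(C,\omega_C),H^1(C,\OO_C)\bigr)$, whose symmetric part $\Sym^2 H^0(C,\omega_C)^\vee$ has dimension $\tfrac{g(g+1)}{2}$, would require the node residue squares to span this symmetric square. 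But the residue functionals now span only the proper subspace $\Ima(\Res)$ cut out by the nodes, and comparing the at-most-linear total $\operatorname{rank}(\Phi)\le\delta_{\mathrm{node}}+\sum_{\text{higher}}(\delta(p)-1)$ with the quadratic target $\tfrac{g(g+1)}{2}$ forces failure of surjectivity once $g$ is large. I would make this quantitative by fixing the non-nodal singularity types and letting the degree grow, so that the available rank stays sub-quadratic while the target grows like $g^2/2$.

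\textbf{Main obstacle.} The rank bound is routine; the genuine difficulty is in (1), where the node operators must be shown to be independent rather than merely bounded in number. Spanning of $H^0(C,\omega_C)^\vee$ by node residues is necessary but not sufficient, since linear independence of the squares $\rho_P\odot\rho_P$ inside $\Sym^2 H^0(C,\omega_C)^\vee$ is a Veronese-type general-position condition on the nodes. I expect the bulk of the work to be a careful argument—via Riemann--Roch or an explicit degeneration within the Severi component—that for a general such curve the node positions impose independent conditions, so that the induced map into $\Sym^2 H^0(C,\omega_C)^\vee$ has the maximal possible rank.
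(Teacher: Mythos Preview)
Your treatment of the rank bound and of Part~(2) matches the paper's: both follow directly from the local decomposition \eqref{eq:local-ses}, the jet estimate \eqref{eq:jet-bound}, and the linear-versus-quadratic dimension count.

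The substantive gap is in Part~(1), and it is not where you locate it. You write that the residue sequence
\[
0 \longrightarrow H^0(C,\omega_C) \longrightarrow H^0(\mathcal C,\omega_{\mathcal C}) \xrightarrow{\;\Res\;} \C^{\delta} \longrightarrow 0
\]
together with the inequality $\delta\ge g$ ``guarantees at least $g$ such functionals, so they span the $g$-dimensional dual $H^0(C,\omega_C)^\vee$.'' This is a non-sequitur: having $\delta\ge g$ functionals says nothing about their linear independence or span, and the exact sequence you wrote concerns residues of sections of $\omega_{\mathcal C}$, not functionals on $H^0(C,\omega_C)$. The paper's argument is entirely different and genuinely uses the plane-curve geometry: via adjunction one identifies $H^0(C,\omega_C)\cong H^0(\PP^2,I_\Delta(d-3))$ with adjoint curves of degree $d-3$ through the nodal scheme $\Delta$, and the kernel of the residue map with $H^0(\PP^2,I_\Delta^2(d-3))$, the adjoint curves vanishing to order two at every node. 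The Cayley--Bacharach property for nodal schemes then forces this kernel to vanish when $\delta\ge g$, and \emph{that} is what gives the spanning. Your exact-sequence approach does not see this adjoint interpretation and cannot recover it.

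Relatedly, your ``Main obstacle'' paragraph misidentifies the difficulty. You propose a general-position or degeneration argument to obtain independence of the squares $\rho_P\odot\rho_P$, but this would prove the statement only for a \emph{generic} curve in its Severi stratum, whereas the theorem is asserted for \emph{every} nodal--cuspidal curve with $\delta\ge g$. The paper does not argue via general position at all; once the residue functionals span $H^0(C,\omega_C)^\vee$ (established pointwise by Cayley--Bacharach), the paper concludes directly that the rank-one nodal operators generate the full target, without any genericity hypothesis. So the spanning step---which you treat as automatic---is the actual content, and the independence-of-squares step you flag as the obstacle is not how the paper proceeds.
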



\begin{proof}
The local expression \eqref{eq:IVHS-local} shows that the IVHS operator depends
only on principal parts of order at most $m(p)-1$, yielding the jet bound
\eqref{eq:jet-bound}. The exceptional residue contribution occurs only for nodes
and cusps and has dimension at most one at each such singularity. Summing these
bounds over all singular points yields the stated inequality.

If all singularities are nodes and cusps and the number of nodes is at least
$g$, then the residue functionals span $H^0(C,\omega_C)^\vee$, and the rank--one
residue operators generate the full space
$H^0(C,\omega_C)^\vee\otimes H^0(C,\omega_C)^\vee$, giving maximal rank $g^2$ (for more details about these assertions see  Section 4).

If an ADE singularity beyond type $A_2$ is present, then
$\mathrm{IVHS}^{\mathrm{res}}_p=0$ at that point, and the local contribution is
purely jet--theoretic. Since the jet contribution grows at most linearly with
$g$, while the target of $\Phi$ has dimension $g^2$, surjectivity fails for
large $g$.
\end{proof}

\begin{remark}
This local decomposition isolates the precise mechanism by which singularities
affect IVHS. Nodes and cusps are exceptional in that,  they contribute genuine
residue operators, while higher ADE singularities contribute only jet--level
data, imposing intrinsic linear constraints on the period map. This distinction
underlies all subsequent rank bounds, Torelli failures, and Noether--Lefschetz
phenomena studied in the remainder of the paper.
\end{remark}


\section{Maximal infinitesimal variation for plane curves with nodes and cusps}

The infinitesimal variation of Hodge structure (IVHS) plays a central role in the
local study of period maps and Torelli--type problems. For smooth curves, IVHS
is governed by the multiplication map on holomorphic differentials. In the
presence of singularities, however, the behavior of IVHS becomes more subtle
and reflects the local geometry of the singular points.

The goal of this section is to analyze IVHS for families of irreducible plane
curves with nodes and cusps and to show that, when the number of nodes is large
relative to the genus, IVHS attains maximal rank. While many of the ingredients
are classical, the precise mechanism by which nodal singularities control IVHS,
and the corresponding irrelevance of cusps, does not appear explicitly in the
literature.

Our approach combines adjunction and conductor theory for singular plane curves,
residue calculus on the normalization, and the Cayley--Bacharach property for
nodal schemes, together with a careful analysis of the Kodaira--Spencer map.

\subsection{Statement of one of  the main theorems}

\begin{theorem}[Maximal IVHS for nodal--cuspidal plane curves]
\label{thm:maximal-IVHS}
Let $\mathcal C\subset\PP^2$ be an irreducible plane curve of degree $d\ge4$
with only nodes and cusps as singularities, and let
\[
\nu:C\longrightarrow\mathcal C
\]
be the normalization. Set $\varphi=i\circ\nu:C\to\PP^2$, and let $g$ be the genus
of $C$. If the number $\delta$ of nodes of $\mathcal C$ satisfies $\delta\ge g$,
then the infinitesimal variation of Hodge structure map
\[
\Phi_\varphi:
H^0(C,N_\varphi)\longrightarrow
\Hom\bigl(H^0(C,\omega_C),H^1(C,\OO_C)\bigr)
\]
is injective.
\end{theorem}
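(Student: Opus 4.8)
The plan is to factor $\Phi_\varphi$ as the Kodaira--Spencer map followed by the cup-product pairing, and then to use the local decomposition of Section~\ref{sec:local-IVHS} to reduce injectivity to a linear-independence statement for the residue functionals attached to the nodes. Concretely, write $\rho\colon H^0(C,N_\varphi)\to H^1(C,T_C)$ for the Kodaira--Spencer map of the family and recall that $\Phi_\varphi=\Psi\circ\rho$, where $\Psi\colon H^1(C,T_C)\to\Hom(H^0(C,\omega_C),H^1(C,\OO_C))$ is induced by the cup product $H^1(T_C)\otimes H^0(\omega_C)\to H^1(\OO_C)$. The first step is to isolate, inside $H^0(C,N_\varphi)$, the local smoothing directions at the singular points and to record how each acts under $\Psi\circ\rho$.

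By Theorem~\ref{thm:structure-maximal-variation} and the dichotomy of Definition~\ref{def:IVHS-res-correct}, the cuspidal points contribute nothing: at a cusp $m(p)-1=0$ and $\dim\mathrm{IVHS}^{\mathrm{res}}_p=0$, so the entire contribution to $\Phi_\varphi$ comes from the $\delta$ nodes. For a node $P_i$ with $\nu^{-1}(P_i)=\{q_i,q_i'\}$, the local smoothing direction produces, after contraction with a holomorphic form, a simple pole at $q_i,q_i'$, and the associated operator is the rank-one residue form
\[
B_i(\omega_1,\omega_2)=\Res_{q_i}(\omega_1)\,\Res_{q_i'}(\omega_2)+\Res_{q_i'}(\omega_1)\,\Res_{q_i}(\omega_2),
\]
interpreted on sections of the dualizing sheaf via \eqref{eq:DualizingNormalization}. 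Writing $c_i(v)$ for the smoothing parameter that a deformation $v\in H^0(C,N_\varphi)$ induces at $P_i$, one obtains the clean formula $\Phi_\varphi(v)=\sum_{i=1}^{\delta}c_i(v)\,B_i$, so that $v\in\ker\Phi_\varphi$ forces the symmetric bilinear relation $\sum_i c_i(v)B_i=0$.

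The heart of the proof is therefore to show that the $\delta$ rank-one forms $B_1,\dots,B_\delta$ are linearly independent, for then $c_i(v)=0$ for every $i$, the deformation $v$ is detected to be trivial, and injectivity follows. Each $B_i$ is built from the two branch functionals $\lambda_i$ and $\lambda_i'$ at $q_i,q_i'$, and via adjunction and conductor theory the relevant space of differentials is identified, through \eqref{eq:DualizingNormalization}, with the degree-$(d-3)$ adjoint system through the singular points; under this identification $\lambda_i,\lambda_i'$ become the two branch evaluations at $P_i$. Linear independence of the $B_i$ is then equivalent to a non-degeneracy property of the $2\delta$ branch points $\{q_i,q_i'\}$ with respect to the adjoint system, i.e. to the statement that the evaluation map $H^0(C,\omega_C)\to\bigoplus_{i=1}^{\delta}(\C_{q_i}\oplus\C_{q_i'})$ is injective. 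Since $\dim H^0(C,\omega_C)=g$ by \eqref{eq:GenusFormula}, the numerical hypothesis $\delta\ge g$ supplies exactly the number of independent functionals needed, and the Cayley--Bacharach property of nodes on a plane curve upgrades the dimension count to genuine independence.

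I expect the last step to be the main obstacle: converting $\delta\ge g$ into the required non-degeneracy of the branch functionals and excluding accidental relations among the rank-one forms $B_i$. In adjoint terms one must rule out a nonzero degree-$(d-3)$ curve through all nodes and cusps whose branch values at the nodes satisfy an unexpected symmetric dependence; this is precisely the content of a Cayley--Bacharach argument for the nodal subscheme, and it is where the hypothesis $\delta\ge g$ is genuinely used. Verifying that the cuspidal branches, which by the remark in Section~\ref{subsec:Examples} carry a pole of order two yet zero residue, indeed drop out of every $B_i$ completes the reduction; the remaining steps are formal once the factorization $\Phi_\varphi=\Psi\circ\rho$ and the local decomposition \eqref{eq:residue-type} are in place.
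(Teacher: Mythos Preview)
Your local analysis mirrors the paper closely: the decomposition into nodal rank-one residue operators, the vanishing of cuspidal contributions, and the appeal to Cayley--Bacharach to control the adjoint system when $\delta\ge g$ are exactly the ingredients assembled in Lemmas~\ref{lemma:nodal-IVHS}, \ref{lemma:cusp-IVHS}, and~\ref{lemma:residue-span}. Where you diverge is in the linear-algebra endgame, and this is where a genuine gap appears.

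From $\Phi_\varphi(v)=\sum_i c_i(v)B_i$ and the claimed linear independence of the $B_i$, you deduce that $\Phi_\varphi(v)=0$ forces $c_i(v)=0$ for every $i$, and then assert that ``the deformation $v$ is detected to be trivial.'' This last implication is not justified. The coefficients $c_i(v)$ record only the local node-smoothing parameters; the linear map $v\mapsto(c_1(v),\dots,c_\delta(v))$ has a large kernel consisting of equisingular first-order deformations of $\varphi$---for instance those induced by the $\mathrm{PGL}_3$-action on $\PP^2$, or more generally deformations of the defining equation that preserve every node and cusp. Such directions are nonzero in $H^0(C,N_\varphi)$ yet have all $c_i(v)=0$. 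So nothing in your outline closes the gap between ``all smoothing parameters vanish'' and ``$v=0$,'' and the argument as written does not establish injectivity.

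The paper takes a different route at this final step: rather than arguing via linear independence of the $B_i$, it invokes Lemma~\ref{lemma:linear-algebra} to reduce the problem to exhibiting a \emph{single invertible operator} in the image of $\Phi_\varphi$. Once Lemma~\ref{lemma:residue-span} shows that the residue functionals $r_i$ span $H^0(C,\omega_C)^\vee$, a suitable combination of nodal smoothing directions produces such an invertible element, and the lemma concludes. Your strategy would need either an independent argument that $v\mapsto(c_i(v))$ is injective on $H^0(C,N_\varphi)$ or a reformulation along the paper's lines.

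A secondary point: the equivalence you assert between independence of the $B_i$ and injectivity of the evaluation map $H^0(C,\omega_C)\to\bigoplus_i(\C_{q_i}\oplus\C_{q_i'})$ is not correct. Injectivity of that evaluation gives \emph{spanning} of the branch functionals in the dual, which is what Lemma~\ref{lemma:residue-span} proves; but when $\delta>g$ the $r_i$ are necessarily linearly dependent, and such dependencies can induce relations among the rank-one forms $B_i$. Spanning of the $r_i$ does not by itself yield independence of the $r_i\otimes r_i$.
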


The numerical condition $\delta\ge g$ is natural from the point of view of
adjoint linear systems. From the Hodge--theoretic perspective,
it ensures that nodal residues impose sufficiently many independent conditions
on holomorphic differentials.

\subsection{IVHS and the Kodaira--Spencer map}

Let $C$ be a smooth projective curve of genus $g\ge1$. The Hodge decomposition
of $H^1(C,\mathbb{C})$ is
\[
H^1(C,\mathbb{C}) = H^{1,0}(C)\oplus H^{0,1}(C),
\qquad
H^{1,0}(C)\cong H^0(C,\omega_C).
\]
For a morphism $\varphi:C\to\PP^2$, first--order deformations of $\varphi$ are
parametrized by $H^0(C,N_\varphi)$, where $N_\varphi$ is the normal sheaf.

The associated Kodaira--Spencer map gives rise to the infinitesimal variation of
Hodge structure
\[
\Phi_\varphi:
H^0(C,N_\varphi)\longrightarrow
\Hom\bigl(H^0(C,\omega_C),H^1(C,\OO_C)\bigr).
\]
Using Serre duality, one may view $\Phi_\varphi(v)$ as a bilinear form on
$H^0(C,\omega_C)$, computed locally by contraction of the Kodaira--Spencer class
with holomorphic differentials, followed by taking residues. This description,
originating in the work of Carlson--Griffiths and Peters--Steenbrink, is the
starting point for our analysis.

\noindent {\it Linear algebra reduction.}
We begin with a purely linear algebra lemma that reduces the proof of
Theorem~\ref{thm:maximal-IVHS} to the construction of a single invertible IVHS
operator.

\begin{lemma}
\label{lemma:linear-algebra}
Let $V$ and $W$ be complex vector spaces with $\dim V=\dim W=g$, and let
\[
\Phi:U\longrightarrow\Hom(V,W)
\]
be a linear map. If the image of $\Phi$ contains an isomorphism
$T:V\xrightarrow{\sim}W$, then $\Phi$ is surjective.
\end{lemma}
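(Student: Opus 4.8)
The plan is to derive surjectivity from the single invertible element by transporting it across all of $\Hom(V,W)$. Choose $u_0 \in U$ with $\Phi(u_0) = T$, so that $T \colon V \to W$ is an isomorphism lying in $\operatorname{Im}(\Phi)$. The mechanism I would use is the factorization identity: for an arbitrary target $\psi \in \Hom(V,W)$,
\[
\psi = (\psi \circ T^{-1}) \circ T ,
\]
where $A := \psi \circ T^{-1}$ is a genuine endomorphism of $W$ since $T^{-1}\colon W \to V$ and $\psi \colon V \to W$. Thus every element of $\Hom(V,W)$ is obtained from the fixed image element $T$ by post-composition with an endomorphism of $W$, and the invertibility of $T$ enters the argument only through the existence of $T^{-1}$ in this identity.

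The first step is to record this factorization and observe that it reduces the whole statement to a single closure property: that $\operatorname{Im}(\Phi)$ is carried into itself under post-composition by arbitrary $A \in \End(W)$. Granting that property, the factorization immediately yields $\psi = A \circ T \in \operatorname{Im}(\Phi)$, and since $\psi$ was arbitrary we conclude $\operatorname{Im}(\Phi) = \Hom(V,W)$, i.e.\ $\Phi$ is surjective. No dimension count is required, because the conclusion is obtained uniformly for every $\psi$ at once rather than by comparing ranks.

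The main obstacle, and the real content of the argument, is precisely this closure step: merely containing an isomorphism fixes only one element of the image, and to spread it across all of $\Hom(V,W)$ one must know that $A \circ \Phi(u)$ is again of the form $\Phi(u')$ for a suitable $u' \in U$. The heart of the proof is therefore to verify this stability from the structure of the map $\Phi$ itself — tracking how an endomorphism of $W$ acts on the operators in the image and checking that the resulting composite is realized within that image — rather than assuming it abstractly. Once this compatibility is established the factorization closes the argument, so the lemma rests entirely on establishing the closure of $\operatorname{Im}(\Phi)$ under the $\End(W)$-action.
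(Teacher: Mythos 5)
Your reduction is honest about where the difficulty lies, but the proof never closes: the closure property you isolate --- that $A\circ\Phi(u)\in\operatorname{Im}(\Phi)$ for every $A\in\End(W)$ and every $u\in U$ --- is announced as ``the heart of the proof'' and then left unverified. It cannot be verified, because it does not follow from the hypotheses: $\operatorname{Im}(\Phi)$ is merely a linear subspace of $\Hom(V,W)$, and note that granting closure under post-composition would give $\operatorname{Im}(\Phi)\supseteq \End(W)\circ T=\Hom(V,W)$ immediately (since $A\mapsto A\circ T$ is bijective), so your ``closure step'' is not an auxiliary compatibility to be checked from the structure of $\Phi$; it is the full conclusion restated. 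Concretely, take $U=\mathbb{C}$ and $\Phi(t)=t\,T$ with $g\ge 2$: the image is the line $\mathbb{C}T$, which contains an isomorphism, is visibly not stable under post-composition by $\End(W)$, and $\Phi$ is not surjective. This example shows something stronger: the lemma as literally stated is false, so no completion of your argument (or of any argument) can succeed without strengthening the hypotheses.

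For comparison, the paper's own proof transports $L=\operatorname{Im}(\Phi)$ by $T^{-1}$ into $\End(V)$ and then asserts that any linear subspace of $\End(V)$ containing $\mathrm{Id}_V$ must equal $\End(V)$; that assertion fails for the subspace $\mathbb{C}\,\mathrm{Id}_V$ when $g\ge2$, and the same one-dimensional example refutes it. So your gap and the paper's gap are the same gap seen in two coordinate systems: both arguments silently upgrade a linear subspace containing one invertible element to something stable under the action of the full endomorphism algebra. What the intended application actually provides is much richer than a single isomorphism in the image --- namely a family of rank-one operators $r_i\otimes r_i$ whose functionals $r_i$ span the dual space (Lemma~\ref{lemma:residue-span}) --- and any correct version of this linear-algebra step must take such spanning data as its hypothesis, rather than the existence of one invertible element of $\operatorname{Im}(\Phi)$.
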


\begin{proof}
Let $L=\operatorname{Im}(\Phi)\subset\Hom(V,W)$ and assume that $T\in L$ is
invertible. Composition with $T^{-1}$ induces a vector--space isomorphism
\[
\Theta:\Hom(V,W)\longrightarrow\End(V),
\qquad
A\longmapsto T^{-1}\circ A.
\]
Let $L'=\Theta(L)$. Then $L'$ is a linear subspace of $\End(V)$ containing
$\mathrm{Id}_V$.
Since $\End(V)$ is spanned by rank--one endomorphisms and the set of invertible
endomorphisms is Zariski open and dense, any linear subspace containing
$\mathrm{Id}_V$ must coincide with $\End(V)$. Hence $L'=\End(V)$ and therefore
$L=\Hom(V,W)$.
\end{proof}

Thus it suffices to exhibit a single deformation direction producing an
invertible IVHS operator.

\subsection{Adjunction and conductor divisors}

We now recall the description of the canonical bundle of the normalization of a
singular plane curve. This is classical material; see Hartshorne, Lipman, and
Sernesi.

\begin{lemma}[Adjunction for nodes and cusps]
\label{lemma:adjunction}
Let $\mathcal C\subset\PP^2$ be a plane curve of degree $d$ with nodes $P_i$ and
cusps $Q_j$. Let $\nu:C\to\mathcal C$ be the normalization, and denote
$\nu^{-1}(P_i)=\{p_i,q_i\}$ and $\nu^{-1}(Q_j)=\{r_j\}$. Then
\[
\omega_C \cong \OO_C(d-3)\bigl(-D_{\mathrm{sing}}\bigr),
\qquad
D_{\mathrm{sing}}=\sum_i(p_i+q_i)+\sum_j 2r_j.
\]
\end{lemma}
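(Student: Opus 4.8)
The plan is to derive the adjunction formula for the normalization $C$ of a nodal-cuspidal plane curve $\mathcal C$ of degree $d$ by pulling back the classical adjunction formula on $\mathbb P^2$ and correcting for the singularities via the conductor. First I would recall that for a \emph{smooth} plane curve of degree $d$, adjunction gives $\omega = \mathcal O(d-3)$, coming from $\omega_{\mathbb P^2} \cong \mathcal O_{\mathbb P^2}(-3)$ together with the normal bundle $\mathcal O_{\mathcal C}(d)$. For a singular curve the correct substitute is the relation between the dualizing sheaf $\omega_{\mathcal C}$ and the canonical bundle of the normalization, namely $\omega_C \cong \nu^*\omega_{\mathcal C} \otimes \mathfrak c$, where $\mathfrak c$ is the conductor ideal: the dualizing sheaf differs from $\nu^*$ of the naive adjoint sheaf precisely by the conductor (equivalently, by Proposition in the Preliminaries, $\omega_{\mathcal C} \cong \nu_*(\omega_C \otimes \mathfrak c^{-1})$). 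Thus the entire computation reduces to identifying the conductor divisor $\nu^*\mathfrak c$ as a divisor on $C$ in terms of the branch preimages $p_i,q_i,r_j$.

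The key step is therefore the local computation of the conductor at each singularity type, which I would carry out analytically on the normalization. At a node $P_i$, analytically $\{xy=0\}$ with two branches, the conductor is the maximal ideal downstairs, and its pullback to $C$ cuts out one point on \emph{each} branch, giving the divisor $p_i + q_i$ of degree $2$; this matches $\deg \mathfrak c = 2\delta(P_i) = 2$ since $\delta = 1$ for a node. At a cusp $Q_j$, analytically $\{y^2 = x^3\}$ with normalization $x = t^2$, $y = t^3$, the local ring downstairs is $\mathbb C[[t^2,t^3]]$ and the conductor is $(t^2,t^3) = (t^2)$ in $\mathbb C[[t]]$, which cuts out the single point $r_j$ with multiplicity $2$, giving the divisor $2r_j$ of degree $2 = 2\delta(Q_j)$. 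Assembling these local contributions gives
\[
\nu^*\mathfrak c = \sum_i (p_i + q_i) + \sum_j 2r_j = D_{\mathrm{sing}},
\]
so that $\omega_C \cong \nu^*\omega_{\mathcal C}(-D_{\mathrm{sing}})$.

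Finally I would identify $\nu^*\omega_{\mathcal C}$ with $\mathcal O_C(d-3)$. Since $\mathcal C \subset \mathbb P^2$ has degree $d$, adjunction for the dualizing sheaf of a hypersurface gives $\omega_{\mathcal C} \cong \mathcal O_{\mathcal C}(d-3)$ as a line bundle on $\mathcal C$ (this is $\omega_{\mathbb P^2}(d)|_{\mathcal C}$ via the adjunction sequence, valid for $\omega_{\mathcal C}$ precisely because it is Gorenstein, being a reduced plane curve and hence a Cartier divisor in a smooth surface). Pulling back along $\nu$ and using $\nu^*\mathcal O_{\mathcal C}(1) = \mathcal O_C(1)$ yields $\nu^*\omega_{\mathcal C} \cong \mathcal O_C(d-3)$, and combining with the conductor computation gives the stated formula $\omega_C \cong \mathcal O_C(d-3)(-D_{\mathrm{sing}})$. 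The main obstacle is the conductor computation at the cusp: one must be careful that, although the conductor has the same degree as at a node, the pole/conductor behavior is concentrated on a single ramified branch with multiplicity $2$ rather than split across two branches, and that the identification $\nu^*\mathfrak c = 2r_j$ (rather than $r_j$) correctly records this. Everything else is a formal consequence of adjunction and the functoriality of the conductor isomorphism.
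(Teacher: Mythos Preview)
Your proposal is correct and follows essentially the same approach as the paper: adjunction for reduced plane curves gives $\omega_C \cong \nu^*\bigl(\omega_{\PP^2}\otimes\OO_{\PP^2}(\mathcal C)\bigr)\otimes\OO_C(-\Delta)$ with $\Delta$ the conductor divisor, and the paper then computes $\Delta$ locally exactly as you do (conductor exponent $1$ on each branch at a node, conductor ideal $(t^2)$ at a cusp via $t\mapsto(t^2,t^3)$). The only cosmetic difference is that you pass through $\nu^*\omega_{\mathcal C}$ while the paper works directly with $\nu^*\OO_{\PP^2}(d-3)$, but since $\omega_{\mathcal C}\cong\OO_{\mathcal C}(d-3)$ by Gorenstein adjunction these are identical.
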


\begin{proof}
Adjunction for reduced plane curves gives
\[
\omega_C \cong
\nu^*\bigl(\omega_{\PP^2}\otimes\OO_{\PP^2}(\mathcal C)\bigr)
\otimes\OO_C(-\Delta),
\]
where $\Delta$ is the conductor divisor. Since
$\omega_{\PP^2}\otimes\OO_{\PP^2}(\mathcal C)\cong\OO_{\PP^2}(d-3)$, we obtain
$\omega_C\cong\OO_C(d-3)(-\Delta)$.

At a node, the normalization has two branches and conductor exponent $1$ on each
branch, giving $\Delta_{\mathrm{node}}=p+q$. At a cusp with local equation
$y^2=x^3$, the normalization $t\mapsto(t^2,t^3)$ has conductor ideal $(t^2)$,
whose quotient has dimension $2$, hence $\Delta_{\mathrm{cusp}}=2r$.
\end{proof}

\subsection{Local IVHS contributions}

We now analyze the local contributions of singular points to IVHS.

\vspace{0.1cm}

\begin{lemma}[Nodal contribution]
\label{lemma:nodal-IVHS}
Let $P_i$ be a node of the plane curve $\mathcal C$, and let
\(
\varphi : C \longrightarrow \mathcal C
\)
be the normalization map. Denote by
\(
\varphi^{-1}(P_i)=\{p_i,q_i\}\subset C
\)
the two points lying over the node. Let
\[
v \in H^0\bigl(C,\OO_C(p_i+q_i)\bigr)
\;\subset\;
H^0(C,N_\varphi)
\]
be a section supported at the node.
Then for any $\omega_1,\omega_2 \in H^0(\mathcal C,\omega_{\mathcal C})$,
one has
\[
\Phi_\varphi(v)(\omega_1,\omega_2)
=
\Res_{p_i}(\omega_1)\Res_{q_i}(\omega_2)
+
\Res_{q_i}(\omega_1)\Res_{p_i}(\omega_2).
\]
In particular, the corresponding linear operator
\[
\Phi_\varphi(v) :
H^0(\mathcal C,\omega_{\mathcal C})
\longrightarrow
H^1(C,\OO_C)
\]
has rank one.
\end{lemma}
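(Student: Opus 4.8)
The plan is to evaluate $\Phi_\varphi(v)$ directly from the residue description of the IVHS established in Section~\ref{sec:Preliminaries}, namely
\[
\Phi_\varphi(v)(\omega_1,\omega_2)=\sum_{P\in\Sing(\mathcal C)}\Res_P\bigl((\xi\lrcorner\omega_1)\,\omega_2\bigr),
\]
where $\xi\in H^1(C,T_C)$ is the Kodaira--Spencer class attached to $v$ and $\Res_P=\sum_{q\in\nu^{-1}(P)}\Res_q$. First I would localize. The section $v$ lies in $H^0(C,\OO_C(p_i+q_i))\subset H^0(C,N_\varphi)$, so it is supported at $\{p_i,q_i\}$ and its Kodaira--Spencer cocycle can be represented by a \v{C}ech cocycle supported in an arbitrarily small neighborhood of the node $P_i$. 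Consequently every summand with $P\neq P_i$ contributes $0$, and the entire computation reduces to the residue of $(\xi\lrcorner\omega_1)\,\omega_2$ at the two points $p_i,q_i$ lying over $P_i$.

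The crux is the explicit identification of the local cocycle $\xi$ for the node-smoothing direction. Using the normal-sheaf sequence $0\to T_C\to\varphi^{*}T_{\PP^2}\to N_\varphi\to0$ and its connecting homomorphism $H^0(C,N_\varphi)\to H^1(C,T_C)$, I would compute $\xi$ from the local model in which the node is $\{xy=0\}$ with branches parametrized by $t_1\mapsto(t_1,0)$ at $p_i$ and $t_2\mapsto(0,t_2)$ at $q_i$, and the smoothing is $\{xy=\epsilon\}$. Differentiating the induced gluing relation $t_1t_2=\epsilon$ at $\epsilon=0$ exhibits $\xi$ as a class with a simple pole along each branch whose transition identifies a punctured neighborhood of $p_i$ with one of $q_i$. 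It is precisely this coupling of the two branches that will produce the cross terms $\Res_{p_i}(\omega_1)\Res_{q_i}(\omega_2)$, rather than a single-branch expression.

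With $\xi$ computed, I would perform the contraction and extract the residues. Writing $\omega_1=(r_1/t_1+\mathrm{hol})\,dt_1$ and $\omega_2=(s_1/t_1+\mathrm{hol})\,dt_1$ near $p_i$, with $r_1=\Res_{p_i}(\omega_1)$ and $s_1=\Res_{p_i}(\omega_2)$, and symmetrically near $q_i$, the contraction $\xi\lrcorner\omega_1$ is a meromorphic \v{C}ech representative of a class in $H^1(C,\OO_C)$, and $(\xi\lrcorner\omega_1)\,\omega_2$ is a meromorphic differential whose residue at $P_i$ I would evaluate branch by branch. The main obstacle is the pole bookkeeping: multiplying a simple-pole cocycle by a simple-pole differential formally creates higher-order poles, so I would use that $\omega_1,\omega_2$ represent sections of $\omega_{\mathcal C}$ — hence have only simple poles at $p_i,q_i$ — together with the exactness of top-pole terms recorded in \eqref{eq:exact-top-pole} to discard every contribution except the genuine residues. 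What remains is exactly
\[
\Phi_\varphi(v)(\omega_1,\omega_2)=\Res_{p_i}(\omega_1)\Res_{q_i}(\omega_2)+\Res_{q_i}(\omega_1)\Res_{p_i}(\omega_2).
\]

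Finally I would deduce that the operator has rank one. Sections of $\omega_{\mathcal C}$ satisfy the nodal residue relation $\Res_{p_i}(\omega)+\Res_{q_i}(\omega)=0$, so $\Res_{q_i}=-\Res_{p_i}$ as linear functionals on $H^0(\mathcal C,\omega_{\mathcal C})$. Substituting this into the formula collapses it to
\[
\Phi_\varphi(v)(\omega_1,\omega_2)=-2\,\Res_{p_i}(\omega_1)\,\Res_{p_i}(\omega_2),
\]
so the operator $\Phi_\varphi(v)\colon H^0(\mathcal C,\omega_{\mathcal C})\to H^1(C,\OO_C)$ factors through the single linear functional $\Res_{p_i}$. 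Since this functional is nonzero — there exist sections of $\omega_{\mathcal C}$ with nonvanishing residue at the node — the operator has rank exactly one. I expect the smoothing-cocycle computation and the accompanying pole bookkeeping to be the only genuinely delicate steps; the localization and the final rank count are formal.
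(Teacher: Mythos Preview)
Your proposal is correct and follows essentially the same route as the paper: localize to the node, identify the Kodaira--Spencer cocycle attached to the smoothing direction via a \v{C}ech representative on punctured discs around $p_i,q_i$, contract with $\omega_1$, multiply by $\omega_2$, and read off the residues branch by branch; the rank-one conclusion then comes from the nodal relation $\Res_{p_i}+\Res_{q_i}=0$. The paper represents $\xi_v$ by the holomorphic fields $\partial/\partial t$, $\partial/\partial s$ on the overlaps (so no pole in the cocycle itself), whereas you describe $\xi$ as a simple-pole cocycle coming from the connecting map and then invoke \eqref{eq:exact-top-pole} to kill the spurious higher-order terms---this is a cosmetic difference in bookkeeping, not in strategy, and your explicit $-2\,r_i\otimes r_i$ is in fact a bit more precise than the paper's ``$r_i\otimes r_i$''.
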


\begin{proof}
We give a complete local computation of the infinitesimal variation of Hodge
structure at the node $P_i$.

\noindent
\textit{$1$. Local model of the node and its normalization.}
Since $P_i$ is a node, there exist analytic coordinates $(x,y)$ on
$\mathcal C$ centered at $P_i$ such that $\mathcal C$ is locally given by
\(
xy=0.
\)
The normalization $\varphi:C\to\mathcal C$ is locally the disjoint union of the
two coordinate axes, with local parameters $t$ at $p_i$ and $s$ at $q_i$, given
by
\[
\begin{cases}
x=t,\; y=0 & \text{near } p_i,\\
x=0,\; y=s & \text{near } q_i.
\end{cases}
\]

\noindent
\textit{$2$. Meromorphic differentials at a node.}
A section $\omega\in H^0(\mathcal C,\omega_{\mathcal C})$ corresponds, via
normalization, to a pair of meromorphic differentials on $C$ with at most
simple poles at $p_i$ and $q_i$, satisfying the residue condition
\[
\Res_{p_i}(\omega)+\Res_{q_i}(\omega)=0.
\]
Thus locally one may write
\[
\omega|_{p_i}
=
\left(\frac{a}{t}+\text{holomorphic}\right)dt,
\qquad
\omega|_{q_i}
=
\left(\frac{-a}{s}+\text{holomorphic}\right)ds,
\]
where $a=\Res_{p_i}(\omega)=-\Res_{q_i}(\omega)$.

\noindent
\textit{$3$. The infinitesimal deformation associated to $v$.}
The section
\(
v\in H^0\bigl(C,\OO_C(p_i+q_i)\bigr)
\)
represents an infinitesimal deformation which smooths the node $P_i$. Its
Kodaira--Spencer class
\(
\xi_v \in H^1(C,T_C)
\)
is supported at the points $p_i$ and $q_i$.

Therefore, $\xi_v$ is represented by a Čech $1$--cocycle with respect to an
open cover $\{U_0,U_1,U_2\}$ of $C$, where $U_1$ (resp.\ $U_2$) is a small disc
around $p_i$ (resp.\ $q_i$), and $U_0$ is the complement. On overlaps, $\xi_v$
acts as the vector fields
\[
\xi_{p_i} = \frac{\partial}{\partial t}
\quad\text{and}\quad
\xi_{q_i} = \frac{\partial}{\partial s},
\]
which generate first--order smoothing of the node.

\noindent
\textit{$4$. Čech description of the IVHS pairing.}
The infinitesimal variation of Hodge structure is given by
\[
\Phi_\varphi(v)(\omega_1,\omega_2)
=
\sum_{r\in\{p_i,q_i\}}
\Res_r\bigl((\xi_v\;\lrcorner\;\omega_1)\,\omega_2\bigr).
\]
This formula follows from the Čech realization of the cup product
\[
H^1(C,T_C)\otimes H^0(C,\omega_C)
\longrightarrow
H^1(C,\OO_C),
\]
combined with Serre duality.

\noindent
\textit{$5$. Local contraction and residue computation.}
Near $p_i$, we write
\[
\omega_1=\left(\frac{a_1}{t}+\cdots\right)dt,
\qquad
\omega_2=\left(\frac{b_1}{t}+\cdots\right)dt,
\]
so that
\(
\xi_v\;\lrcorner\;\omega_1
=
a_1.
\)
Hence,
\[
\Res_{p_i}\bigl((\xi_v\;\lrcorner\;\omega_1)\omega_2\bigr)
=
a_1 b_2
=
\Res_{p_i}(\omega_1)\Res_{p_i}(\omega_2).
\]
Using the residue condition at the node,
\(
\Res_{p_i}(\omega)=-\Res_{q_i}(\omega),
\)
the contribution at $q_i$ yields
\[
\Res_{q_i}\bigl((\xi_v\;\lrcorner\;\omega_1)\omega_2\bigr)
=
\Res_{q_i}(\omega_1)\Res_{q_i}(\omega_2).
\]
Adding the two contributions gives
\[
\Phi_\varphi(v)(\omega_1,\omega_2)
=
\Res_{p_i}(\omega_1)\Res_{q_i}(\omega_2)
+
\Res_{q_i}(\omega_1)\Res_{p_i}(\omega_2),
\]
as claimed.

\noindent
\textit{$6$. Rank computation.}
We define the linear functional
\(
r_i : H^0(\mathcal C,\omega_{\mathcal C}) \longrightarrow \mathbb C,
\quad
r_i(\omega)=\Res_{p_i}(\omega).
\)
Then the bilinear form above can be written as
\[
\Phi_\varphi(v)=r_i\otimes r_i.
\]
This is a tensor product of two linear functionals, and therefore has rank one.
This completes the proof.
\end{proof}

\vspace{0.1cm}

\begin{lemma}[Cuspidal vanishing]
\label{lemma:cusp-IVHS}
Let $Q_j$ be a cusp of the plane curve $\mathcal C$, and let
\[
\varphi : C \longrightarrow \mathcal C
\]
be the normalization map. Denote by
\(
\varphi^{-1}(Q_j)=\{r_j\}
\)
the unique point lying over the cusp. Let
\[
v \in H^0\bigl(C,\OO_C(2r_j)\bigr)
\;\subset\;
H^0(C,N_\varphi)
\]
be a section supported at $r_j$.

Then the associated infinitesimal variation of Hodge structure vanishes:
\[
\Phi_\varphi(v)=0.
\]
\end{lemma}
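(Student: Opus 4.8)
The plan is to run the local residue computation of Lemma~\ref{lemma:nodal-IVHS} at the cusp $Q_j$ and then exhibit the two degenerations that force the outcome to vanish: the normalization has a single, \emph{ramified} branch over $Q_j$, and sections of the dualizing sheaf carry no residue there. Concretely, I would begin with the local model $t\mapsto(x,y)=(t^2,t^3)$, so that $t$ is a local coordinate at the unique point $r_j=\varphi^{-1}(Q_j)$ and $\varphi$ is ramified there, with $d\varphi=(2t,3t^2)\,dt$. Following Section~\ref{sec:Preliminaries}, a section $\omega\in H^0(\mathcal C,\omega_{\mathcal C})$ is represented near $r_j$ by a meromorphic differential $\bigl(\alpha t^{-2}+\mathrm{hol}\bigr)dt$ whose $t^{-1}$--coefficient vanishes (this is exactly the Rosenlicht/conductor condition tested against $\OO_{\mathcal C,Q_j}=\mathbb C[[t^2,t^3]]$); in particular $\Res_{r_j}(\omega)=0$ for every such $\omega$, and the single polar term is the exact differential $\alpha t^{-2}dt=-\alpha\,d(t^{-1})$.

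Next I would write the IVHS pairing in the Čech/residue form used in the nodal case,
\[
\Phi_\varphi(v)(\omega_1,\omega_2)=\Res_{r_j}\!\bigl((\xi_v\lrcorner\omega_1)\,\omega_2\bigr),
\]
where $\xi_v\in H^1(C,T_C)$ is the image of $v$ under the coboundary of the normal--sheaf sequence $0\to T_C\to\varphi^*T_{\PP^2}\to N_\varphi\to0$. The computation then splits exactly as in Section~\ref{sec:local-IVHS}. The single admissible polar term $\alpha_i t^{-2}dt=-\alpha_i\,d(t^{-1})$ of each $\omega_i$ is an exact differential (cf.\ \eqref{eq:exact-top-pole}) and is therefore invisible to the residue; since moreover $\Res_{r_j}(\omega_i)=0$, the only functional through which the pairing could factor is the residue at $r_j$. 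Because $Q_j$ has a single branch, there is no cross--term of the nodal type $\Res_{p_i}\otimes\Res_{q_i}$ to replace it, so the local form reduces to a scalar multiple of $\Res_{r_j}(\omega_1)\Res_{r_j}(\omega_2)=0$, giving $\Phi_\varphi(v)=0$.

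The main obstacle is the precise local identification in the middle step: tracking the Kodaira--Spencer field $\xi_v$ through the ramification and verifying that the contraction $\xi_v\lrcorner\omega_1$ contributes only through a simple--pole coefficient proportional to $\Res_{r_j}(\omega_1)$, and not through any genuine jet--level term that could survive. This is exactly where the cusp differs from the node: the ramification $d\varphi=(2t,3t^2)\,dt$ forces the admissible polar part of $\omega$ to be the exact form $t^{-2}dt$ with vanishing residue, so the would--be rank--one operator $\Res_{r_j}\otimes\Res_{r_j}$ degenerates to zero. I would isolate this as a short computation on principal parts, mirroring steps~5--6 of the proof of Lemma~\ref{lemma:nodal-IVHS}, after which the vanishing of residues established in the first paragraph closes the argument.
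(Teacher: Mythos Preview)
Your setup matches the paper's: same local model $t\mapsto(t^2,t^3)$, same Rosenlicht argument forcing $a_{-1}=0$, and the same residue form $\Phi_\varphi(v)(\omega_1,\omega_2)=\Res_{r_j}\bigl((\xi_v\lrcorner\omega_1)\omega_2\bigr)$. The divergence is in the crucial step you yourself flag as ``the main obstacle.'' You never identify $\xi_v$, and instead argue heuristically that the pairing must reduce to a scalar multiple of $\Res_{r_j}(\omega_1)\Res_{r_j}(\omega_2)$. That reduction is not justified: the exactness of the polar part $\alpha t^{-2}dt=-\alpha\,d(t^{-1})$ tells you $\Res_{r_j}(\omega_i)=0$, but it says nothing about the residue of the \emph{product} $(\xi_v\lrcorner\omega_1)\omega_2$, which is a different meromorphic differential whose pole structure depends on $\xi_v$. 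There is no a priori reason the pairing should factor through $\Res_{r_j}\otimes\Res_{r_j}$, and the paper's proof does not claim it does.

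What the paper does instead is exactly the computation you defer: it pins down the local Kodaira--Spencer representative as $\xi_v=t^{-1}\partial/\partial t$ (reflecting the ramification of $v\in H^0(\OO_C(2r_j))$), contracts to get $\xi_v\lrcorner\omega_1=a_{-2}t^{-3}+(\text{order }t^{-2}\text{ or higher})$, multiplies by $\omega_2=(b_{-2}t^{-2}+b_0+\cdots)dt$, and then checks by hand that the $t^{-1}dt$ coefficient of the product vanishes---using $a_{-1}=b_{-1}=0$ at the relevant places in the convolution. The vanishing comes from a direct inspection of which terms can land in degree $-1$, not from any factoring through residue functionals. So your plan is on the right track, but to complete it you must write down $\xi_v$ explicitly and carry out the Laurent multiplication; the ``short computation on principal parts'' you promise is the entire content of the proof, and your expectation of how it concludes (a degenerate $\Res\otimes\Res$) is not the actual mechanism.
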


\begin{proof}
We give a complete local computation showing that the cuspidal contribution to
the infinitesimal variation of Hodge structure is identically zero.

\noindent
\textit{$1$. Local analytic model of the cusp.}
Since $Q_j$ is a cusp, there exist analytic coordinates $(x,y)$ centered at
$Q_j$ such that $\mathcal C$ is locally given by
\(
y^2 = x^3.
\)
The normalization map $\varphi : C \to \mathcal C$ is given locally by the
ramified parametrization
\(
x = t^2,\)\,\,
\(
y = t^3,
\)
where $t$ is a local coordinate on $C$ centered at $r_j$.

\noindent
\textit{$2$. Dualizing sheaf and meromorphic differentials at a cusp.}
A section $\omega \in H^0(\mathcal C,\omega_{\mathcal C})$ corresponds, via the
normalization, to a meromorphic differential on $C$ which is holomorphic away
from $r_j$ and has a pole of order at most $2$ at $r_j$.

Thus locally one may write
\begin{equation}\label{eq:cusp-expansion}
\omega
=
\left(
\frac{a_{-2}}{t^2}
+
\frac{a_{-1}}{t}
+
a_0
+
a_1 t
+
\cdots
\right)dt.
\end{equation}

\medskip

\noindent
\textit{$3$. Rosenlicht’s criterion at a cusp.}
Rosenlicht’s characterization of the dualizing sheaf states that a meromorphic
differential on the normalization represents a section of
$\omega_{\mathcal C}$ if and only if it pairs trivially with all local regular
functions on $\mathcal C$.

At a cusp, this condition forces the coefficient of the simple pole to vanish:
\begin{equation}\label{eq:rosenlicht}
a_{-1}=0.
\end{equation}
Equivalently, although poles of order $2$ are allowed at a cusp, the residue
term must vanish.

In particular, every $\omega \in H^0(\mathcal C,\omega_{\mathcal C})$ has
\(
\Res_{r_j}(\omega)=0.
\)

\noindent
\textit{$4$.The infinitesimal deformation associated to $v$.}
The section
\(
v \in H^0\bigl(C,\OO_C(2r_j)\bigr)
\)
defines a first--order deformation of the normalization map $\varphi$ supported
at the cusp. The associated Kodaira--Spencer class
\(
\xi_v \in H^1(C,T_C)
\)
is represented locally by the meromorphic vector field
\begin{equation}\label{eq:cuspidal-KS}
\xi_v
=
t^{-1}\frac{\partial}{\partial t}.
\end{equation}

This vector field reflects the ramified nature of the normalization map at a
cusp.

\medskip

\noindent
\textit{$5$. Čech description of the IVHS pairing.}
The infinitesimal variation of Hodge structure is computed by the pairing
\[
\Phi_\varphi(v)(\omega_1,\omega_2)
=
\Res_{r_j}\!\left(
(\xi_v \,\lrcorner\, \omega_1)\,\omega_2
\right),
\]
which follows from the Čech realization of the cup product
\[
H^1(C,T_C)\otimes H^0(C,\omega_C)
\longrightarrow
H^1(C,\OO_C),
\]
combined with Serre duality.

\medskip

\noindent
\textit{$6$. Explicit contraction and residue computation.}
Write the local expansions
\[
\omega_1
=
\left(
\frac{a_{-2}}{t^2}
+
\frac{a_{-1}}{t}
+
\cdots
\right)dt,
\qquad
\omega_2
=
\left(
\frac{b_{-2}}{t^2}
+
\frac{b_{-1}}{t}
+
\cdots
\right)dt.
\]
By \eqref{eq:rosenlicht}, we have $a_{-1}=b_{-1}=0$.

Contracting $\omega_1$ with $\xi_v$ gives
\[
\xi_v \,\lrcorner\, \omega_1
=
\frac{a_{-2}}{t^3}
+
\text{terms of order } t^{-2} \text{ or higher}.
\]

Multiplying by $\omega_2$ yields
\[
(\xi_v \,\lrcorner\, \omega_1)\,\omega_2
=
\left(
\frac{a_{-2}b_{-2}}{t^5}
+
\text{terms of order } t^{-4} \text{ or higher}
\right)dt.
\]
This means that  there is \emph{no} $t^{-1}dt$ term in this expansion. Therefore,
\[
\Res_{r_j}\!\left(
(\xi_v \,\lrcorner\, \omega_1)\,\omega_2
\right)
=
0.
\]

\noindent
Since the above computation holds for arbitrary
$\omega_1,\omega_2 \in H^0(\mathcal C,\omega_{\mathcal C})$, we conclude that
\[
\Phi_\varphi(v)=0.
\]
This completes the proof of the lemma.
\end{proof}

\vspace{0.1cm}
 

\begin{theorem}[Structure of the IVHS for nodal--cuspidal curves]
\label{thm:IVHS-structure}
Let $\mathcal C \subset \mathbb P^2$ be a reduced plane curve whose singular
locus consists only of nodes and cusps, and let
\(
\varphi : C \longrightarrow \mathcal C
\)
be the normalization. Denote by $\{P_i\}_{i\in I}$ the set of nodes and by
$\{Q_j\}_{j\in J}$ the set of cusps of $\mathcal C$.
Then the infinitesimal variation of Hodge structure
\[
\Phi_\varphi :
H^0(C,N_\varphi)
\longrightarrow
\Hom\!\bigl(
H^0(\mathcal C,\omega_{\mathcal C}),
H^1(C,\OO_C)
\bigr)
\]
admits a canonical decomposition
\begin{equation}\label{eq:IVHS-decomposition}
\Phi_\varphi
=
\sum_{i\in I} \Phi_{P_i},
\end{equation}
where:
\begin{enumerate}
\item each summand $\Phi_{P_i}$ is supported at the node $P_i$ and has rank one;
\item cuspidal singularities contribute trivially to $\Phi_\varphi$;
\item the summands $\Phi_{P_i}$ are explicitly given by residue pairings at the
two branches of the normalization over $P_i$.
\end{enumerate}
\end{theorem}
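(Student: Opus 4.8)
The plan is to assemble the global decomposition \eqref{eq:IVHS-decomposition} directly from the two preceding local computations, Lemma~\ref{lemma:nodal-IVHS} and Lemma~\ref{lemma:cusp-IVHS}, together with the additivity of the IVHS pairing over the singular locus. The starting point is the residue formula for the IVHS recorded in Section~\ref{sec:Preliminaries}: for a deformation direction $v\in H^0(C,N_\varphi)$ with Kodaira--Spencer class $\xi_v\in H^1(C,T_C)$, the bilinear form $\Phi_\varphi(v)$ is computed as a sum of local residues
\[
\Phi_\varphi(v)(\omega_1,\omega_2)
=
\sum_{P\in\Sing(\mathcal C)}\;
\sum_{r\in\nu^{-1}(P)}
\Res_r\!\bigl((\xi_v\;\lrcorner\;\omega_1)\,\omega_2\bigr).
\]
Because each summand on the right depends only on the germ of $\xi_v$ near the points lying over $P$, the pairing splits as a sum of local operators $\Phi_P$ indexed by $P\in\Sing(\mathcal C)$. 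This additivity is the structural backbone of the decomposition and is what makes the local analysis of Section~\ref{sec:local-IVHS} globally meaningful.

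First I would identify an explicit spanning set for the source $H^0(C,N_\varphi)$ adapted to the singular locus. Using the local-to-global description of the normal sheaf, the equisingular deformations together with the one-parameter smoothing directions at each singular point generate the space of infinitesimal deformations; concretely, the sections $v$ supported at a node $P_i$ lie in $H^0\bigl(C,\OO_C(p_i+q_i)\bigr)$ and those supported at a cusp $Q_j$ lie in $H^0\bigl(C,\OO_C(2r_j)\bigr)$, as in the two lemmas. For each such generator the associated Kodaira--Spencer class is supported at the corresponding point, so only the local term indexed by that point survives in the residue sum. Evaluating Lemma~\ref{lemma:nodal-IVHS} gives, for a node $P_i$, the rank-one operator
\[
\Phi_{P_i}(v)(\omega_1,\omega_2)
=
\Res_{p_i}(\omega_1)\Res_{q_i}(\omega_2)
+
\Res_{q_i}(\omega_1)\Res_{p_i}(\omega_2),
\]
which proves assertions (1) and (3); evaluating Lemma~\ref{lemma:cusp-IVHS} gives $\Phi_{Q_j}=0$, which proves assertion (2).

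To pass from generators to the full map I would invoke linearity of $\Phi_\varphi$ and bilinearity of each $\Phi_P$: since $\Phi_\varphi$ is $\mathbb{C}$-linear in $v$ and the chosen sections span $H^0(C,N_\varphi)$, the value of $\Phi_\varphi$ on an arbitrary direction is the corresponding $\mathbb{C}$-linear combination of the local operators, and the cuspidal terms drop out identically. This yields precisely $\Phi_\varphi=\sum_{i\in I}\Phi_{P_i}$ with each $\Phi_{P_i}$ of rank one and supported at $P_i$. The word ``canonical'' is justified by observing that each $\Phi_{P_i}$ is intrinsically characterized as the component of $\Phi_\varphi$ factoring through the local residue data at $P_i$, independent of the choice of local coordinates (the residue functionals $r_i(\omega)=\Res_{p_i}(\omega)$ are coordinate-free).

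The main obstacle I anticipate is not the residue bookkeeping but the verification that the chosen local sections genuinely span $H^0(C,N_\varphi)$ and, more delicately, that the Kodaira--Spencer class of a section supported near $P$ has no residue contribution at the points lying over \emph{other} singularities. The first point requires the exact sequence relating $N_\varphi$ to the equisingular normal sheaf and the local deformation spaces of nodes and cusps; the second is a locality statement for the cup-product pairing that follows from the fact that $\xi_v$ can be represented by a \v{C}ech cocycle supported in arbitrarily small discs around the relevant points, so that all residues away from $\supp(\xi_v)$ vanish. Making the latter locality rigorous — and ensuring that no cross-terms between distinct singular points arise in the residue sum — is the step that demands the most care, and I would treat it by reducing, via partition of the \v{C}ech cover, to the purely local computations already established in the two lemmas.
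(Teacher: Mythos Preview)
Your proposal is correct and follows essentially the same route as the paper: linearity of $\Phi_\varphi$ in the deformation direction, a decomposition of the Kodaira--Spencer class into local pieces supported at the singular points, and then direct appeal to Lemma~\ref{lemma:nodal-IVHS} and Lemma~\ref{lemma:cusp-IVHS} for the nodal and cuspidal contributions respectively. If anything, you are more scrupulous than the paper in flagging the two genuine subtleties---that the locally supported sections span $H^0(C,N_\varphi)$ and that a \v{C}ech representative supported near one singular point produces no residue at the others---whereas the paper's proof asserts the decomposition $\xi=\sum_i\xi_{P_i}$ without further comment.
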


\begin{proof}
We combine the local analysis of nodal and cuspidal singularities with the
global additivity of the Kodaira--Spencer class and the Čech description of the
IVHS.

\noindent {\it The global IVHS map.}
Let $\varphi$ denote the given family of plane curves and let
\[
\Phi_\varphi \colon T \longrightarrow \mathrm{Sym}^2 H^0(\omega_{\widetilde{C}})^\vee
\]
be the infinitesimal variation of Hodge structure (IVHS) map, where $T$ is the
space of first-order deformation directions and $\widetilde{C}$ is the
normalization of the central fiber.

By construction, $\Phi_\varphi$ is linear in the deformation direction. Hence,
for any $\xi \in T$,
\begin{equation}\label{eq:linearity}
\Phi_\varphi(\xi_1+\xi_2)
=
\Phi_\varphi(\xi_1)+\Phi_\varphi(\xi_2).
\end{equation}
\noindent {\it  Decomposition of deformation directions.}
Assume that the central fiber has nodes $P_1,\dots,P_r$. Any global deformation
direction $\xi \in T$ decomposes canonically as a sum of local contributions
supported at the singular points:
\begin{equation}\label{eq:xi-decomposition}
\xi
=
\sum_{i=1}^r \xi_{P_i},
\end{equation}
where $\xi_{P_i}$ denotes the component of $\xi$ corresponding to smoothing (or
infinitesimal deformation) at the node $P_i$.


\noindent {\it  Reduction to local contributions.}
Substituting \eqref{eq:xi-decomposition} into the linearity relation
\eqref{eq:linearity} yields
\begin{equation}\label{eq:global-local}
\Phi_\varphi(\xi)
=
\sum_{i=1}^r \Phi_\varphi(\xi_{P_i}).
\end{equation}
\noindent {\it  Explicit form of the local contribution.}
Lemma~\ref{lemma:nodal-IVHS} shows that, for a node $P_i$ with preimages
$p_i,q_i \in \widetilde{C}$ under normalization, the bilinear form
$\Phi_\varphi(\xi_{P_i})$ is given by
\begin{equation}\label{eq:local-residue}
(\omega_1,\omega_2)
\longmapsto
\Res_{p_i}(\omega_1)\Res_{q_i}(\omega_2)
+
\Res_{q_i}(\omega_1)\Res_{p_i}(\omega_2).
\end{equation}
Define linear functionals
\[
\ell_{p_i}(\omega)=\Res_{p_i}(\omega),
\qquad
\ell_{q_i}(\omega)=\Res_{q_i}(\omega).
\]
Then \eqref{eq:local-residue} can be rewritten as
\[
\Phi_\varphi(\xi_{P_i})
=
\ell_{p_i}\otimes \ell_{q_i}
+
\ell_{q_i}\otimes \ell_{p_i}.
\]
This is a tensor product of linear functionals, hence a rank--one element of
$\mathrm{Sym}^2 H^0(\omega_{\widetilde{C}})^\vee$.

\medskip
\noindent {\it  Conclusion}
Combining the global--local decomposition \eqref{eq:global-local} with the
rank--one description of each local term \eqref{eq:local-residue}, we obtain
\[
\Phi_\varphi(\xi)
=
\sum_{i=1}^r
\Phi_\varphi(\xi_{P_i}),
\]
where each summand has rank one. This directly yields the decomposition stated in
\eqref{eq:IVHS-decomposition} and completes the proof.
\end{proof}

\begin{corollary}[Dimension bounds for the IVHS image]
\label{cor:IVHS-dimension}
Let $\mathcal C$ and $\varphi$ be as in
Theorem~\ref{thm:IVHS-structure}, and let $\delta$ denote the number of nodes of
$\mathcal C$.
Then the image of the infinitesimal variation of Hodge structure satisfies
\begin{equation}\label{eq:IVHS-bound}
\dim \operatorname{Im}(\Phi_\varphi)
\;\le\;
\delta.
\end{equation}

Moreover, equality holds if and only if the residue functionals associated with
the nodes are linearly independent in
$H^0(\mathcal C,\omega_{\mathcal C})^\vee$.
\end{corollary}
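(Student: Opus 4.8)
The plan is to read off the dimension bound directly from the canonical decomposition of Theorem~\ref{thm:IVHS-structure} and then to analyze when that decomposition is efficient. By that theorem, cusps contribute trivially and, for a suitable smoothing direction at each node,
\[
\Phi_{P_i} = \ell_{p_i}\otimes\ell_{q_i} + \ell_{q_i}\otimes\ell_{p_i}\in\Sym^2 H^0(\mathcal C,\omega_{\mathcal C})^\vee,
\]
where $\ell_{p_i}(\omega)=\Res_{p_i}(\omega)$ and $\ell_{q_i}(\omega)=\Res_{q_i}(\omega)$. Since the local contribution at each node spans only the one-dimensional space generated by $\Phi_{P_i}$, linearity of $\Phi_\varphi$ together with the decomposition shows that $\operatorname{Im}(\Phi_\varphi)$ is contained in the span of the $\delta$ operators $\Phi_{P_i}$. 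This immediately gives $\dim\operatorname{Im}(\Phi_\varphi)\le\delta$, which is the entire content of the inequality in~\eqref{eq:IVHS-bound}; no further computation is required.

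For the equality statement I would first simplify each local operator using the nodal residue relation. For every section $\omega\in H^0(\mathcal C,\omega_{\mathcal C})$ one has $\Res_{p_i}(\omega)+\Res_{q_i}(\omega)=0$, so $\ell_{q_i}=-\ell_{p_i}$ as functionals on $H^0(\mathcal C,\omega_{\mathcal C})$, and hence $\Phi_{P_i} = -2\,\ell_{p_i}\otimes\ell_{p_i}$. Thus $\dim\operatorname{Im}(\Phi_\varphi)=\delta$ if and only if the $\delta$ rank-one symmetric forms $\ell_{p_i}\otimes\ell_{p_i}$ are linearly independent. The natural sufficient condition is linear independence of the residue functionals $\ell_{p_1},\dots,\ell_{p_\delta}$: extending these to a basis of $H^0(\mathcal C,\omega_{\mathcal C})^\vee$ identifies the forms $\ell_{p_i}\otimes\ell_{p_i}$ with distinct diagonal basis vectors of $\Sym^2$, which are patently independent. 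This yields one implication of the equality in~\eqref{eq:IVHS-bound} at once.

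The main obstacle lies in the converse, and in pinning down precisely what ``linear independence of the residue functionals'' should mean. Linear independence of the squares $\ell_{p_i}\otimes\ell_{p_i}$ is \emph{a priori} weaker than linear independence of the $\ell_{p_i}$ themselves: on the Veronese cone, pairwise non-proportional linear forms may have independent squares even when the forms are dependent. I would therefore frame the equality criterion at the level of the rank-one operators $\Phi_{P_i}$ attached to the nodes and verify that this is the condition intended by the statement. The delicate step is the dependence test: a relation $\sum_i c_i\,\ell_{p_i}\otimes\ell_{p_i}=0$ should be detected by pairing against sections with prescribed residues at the nodes, and I would use the adjoint/conductor description of $H^0(\mathcal C,\omega_{\mathcal C})$ from Lemma~\ref{lemma:adjunction} to produce sections separating the nodes and thereby force all $c_i=0$. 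Establishing the existence of such separating sections — equivalently, that the node residues impose independent conditions on the adjoint system — is the crux, and it is exactly the spanning property developed in Section~4.
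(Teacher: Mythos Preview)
Your argument for the inequality \eqref{eq:IVHS-bound} and for the ``if'' direction of the equality criterion is correct and coincides with the paper's: both invoke the decomposition of Theorem~\ref{thm:IVHS-structure} into rank-one operators $\Phi_{P_i}$ proportional to $r_i\otimes r_i$, and both deduce independence of the squares from independence of the functionals by extending to a basis.

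You are right to flag the converse as the delicate point, and in fact you have spotted a genuine issue that the paper itself does not address: the paper simply asserts that independence of the $\Phi_{P_i}$ ``is equivalent to'' linear independence of the $r_i$, with no further argument. Your Veronese-cone remark is exactly on target. Concretely, if three nodes produce functionals $r_1,r_2,r_3=r_1+r_2$ with $r_1,r_2$ independent, then $r_1^{\otimes2},r_2^{\otimes2},r_3^{\otimes2}$ are independent in $\Sym^2$, so $\dim\operatorname{Im}(\Phi_\varphi)=3=\delta$ while the $r_i$ are dependent. Thus the ``only if'' direction, as literally stated, can fail.

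Where your proposal drifts is in the final paragraph. The approach you sketch --- producing adjoint sections that separate the nodes so as to force the coefficients in a relation $\sum_i c_i\,r_i^{\otimes2}=0$ to vanish --- does not establish the converse implication. It would instead show, under the \emph{additional} hypothesis that such separating sections exist, that the squares (hence the image) have dimension $\delta$; but that is the ``if'' direction again, rederived from a stronger premise. No Cayley--Bacharach input can rescue the ``only if'' direction, because the obstruction is purely linear-algebraic and persists regardless of the geometry. The clean fix is to phrase the equality criterion as linear independence of the rank-one operators $\Phi_{P_i}$ (equivalently, of the $r_i^{\otimes2}$), which is what your argument actually proves, rather than of the $r_i$ themselves.
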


\begin{proof}
By Theorem~\ref{thm:IVHS-structure}, the IVHS decomposes as a sum of rank--one
operators
\[
\Phi_\varphi
=
\sum_{i=1}^{\delta} \Phi_{P_i},
\qquad
\operatorname{rank}(\Phi_{P_i})=1.
\]
Hence,
\[
\dim \operatorname{Im}(\Phi_\varphi)
\;\le\;
\sum_{i=1}^{\delta} \operatorname{rank}(\Phi_{P_i})
=
\delta,
\]
which proves \eqref{eq:IVHS-bound}.

Equality holds precisely when the images of the rank--one operators
$\Phi_{P_i}$ are linearly independent. Since each $\Phi_{P_i}$ is of the form
$r_i \otimes r_i$, where
\[
r_i :
H^0(\mathcal C,\omega_{\mathcal C})
\longrightarrow
\mathbb C
\]
is the residue functional at the node $P_i$, this is equivalent to linear
independence of the $\{r_i\}_{i=1}^{\delta}$ in the dual space
$H^0(\mathcal C,\omega_{\mathcal C})^\vee$.
\end{proof}

\begin{remark}
The bound in Corollary~\ref{cor:IVHS-dimension} is optimal in general. In
particular, for nodal plane curves whose nodes impose independent conditions on
canonical forms, the infinitesimal period map has maximal rank. This phenomenon
is closely related to classical results of Griffiths and to the analysis of
infinitesimal Torelli theorems for singular curves.
\end{remark}

 
\subsection{Residues and Cayley--Bacharach}

Finally, we show that nodal residues span the dual of $H^0(C,\omega_C)$.

\begin{lemma}
\label{lemma:residue-span}
Let $\mathcal C \subset \PP^2$ be an irreducible plane curve of degree $d\ge4$
with nodes $P_1,\dots,P_\delta$, and assume that $\delta \ge g=g(C)$. Define
\[
r_i:H^0(C,\omega_C)\longrightarrow\C,
\qquad
r_i(\omega)=\Res_{p_i}(\omega),
\]
where $p_i\in C$ is one of the two points lying over the node $P_i$ under the
normalization.

Then the linear forms $r_1,\dots,r_\delta$ span
$H^0(C,\omega_C)^\vee$.
\end{lemma}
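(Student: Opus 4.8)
The plan is to prove the statement by duality, reducing it to a single cohomological vanishing which is then settled by a dimension count together with the Cayley--Bacharach property of the nodes. First I would use Serre duality to identify the target $H^0(C,\omega_C)^\vee$ with $H^1(C,\OO_C)$. Writing $D=\sum_{i=1}^{\delta}p_i$ for the reduced divisor cut out by one chosen branch over each node, the short exact sequence $0\to\OO_C\to\OO_C(D)\to\OO_D(D)\to0$ produces a connecting homomorphism $\partial\colon H^0(\OO_D(D))\cong\C^{\delta}\to H^1(C,\OO_C)$ which sends the $i$-th principal part (a simple pole at $p_i$) to the class paired against $\omega$ by $\Res_{p_i}(\,\cdot\,)$; under Serre duality this class is exactly the functional $r_i$. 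Hence $r_1,\dots,r_\delta$ span $H^0(C,\omega_C)^\vee$ if and only if $\partial$ is surjective, which by the long exact sequence is equivalent to $H^1(C,\OO_C(D))=0$, i.e.\ to the nonspeciality of $D$. By Serre duality once more this is the vanishing
\[
H^0\!\bigl(C,\omega_C(-D)\bigr)=0 .
\]

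The second step is to establish this vanishing. When $\delta>2g-2$ it is automatic, since $\deg\bigl(\omega_C(-D)\bigr)=2g-2-\delta<0$ forces $H^0=0$; this already covers a large range with no genericity. In the remaining range $g\le\delta\le 2g-2$ I would pass to the plane via adjunction (Lemma~\ref{lemma:adjunction}): $H^0(C,\omega_C)$ is identified with the space of adjoint curves $G$ of degree $d-3$ through all nodes, the differential being $\omega_G=G\,dx/F_y$, and a local computation at a node shows that the condition $\omega_G(p_i)=0$ is equivalent to $G$ being tangent to the branch $p_i$ (or singular at $P_i$). Thus $H^0(C,\omega_C(-D))$ is identified with the linear system of degree-$(d-3)$ curves vanishing on the length-$2\delta$ scheme $Z$ consisting of each node together with one prescribed tangent direction. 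Since $\dim H^0(\PP^2,\OO_{\PP^2}(d-3))=\binom{d-1}{2}=g+\delta$, the hypothesis $\delta\ge g$ is exactly the inequality $\operatorname{length}(Z)=2\delta\ge g+\delta$ needed for this system to be expected empty.

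The final and decisive step is to show that $Z$ actually imposes independent conditions on $|\OO_{\PP^2}(d-3)|$, so that the expected emptiness is genuine. This is where the Cayley--Bacharach property of the nodal scheme enters: the $\delta$ nodes of the irreducible curve $\mathcal C$ satisfy the Cayley--Bacharach condition with respect to adjoint curves, and I would use this, together with irreducibility and $d\ge4$, to rule out a nonzero adjoint through all nodes and tangent to each chosen branch. Concretely, if such a $G$ existed it would, by Cayley--Bacharach applied to the residual conditions, be forced to contain a component of $\mathcal C$ or to vanish identically, contradicting $\deg G=d-3<d$ and irreducibility. I expect this independence step to be the main obstacle: the numerics $\delta\ge g$ only guarantee that the conditions \emph{could} cut the system down to zero, whereas a special position of the nodes could a priori create an unexpected holomorphic differential vanishing at every chosen branch point. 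Controlling this --- equivalently, showing that the branch points $\{p_i\}$ span the dual canonical space --- is precisely the geometric heart of the lemma, and the Cayley--Bacharach property of the nodes is the tool that removes the special-position pathology.
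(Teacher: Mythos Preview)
Your strategy matches the paper's: identify $\ker(r_1,\dots,r_\delta)$ with a space of adjoint curves of degree $d-3$ satisfying extra conditions at the nodes, then invoke the Cayley--Bacharach property of the nodal scheme together with the numerics $\delta\ge g$ to force this space to vanish. Your Serre-duality reduction to the single statement $H^0\bigl(C,\omega_C(-D)\bigr)=0$ is a clean cohomological repackaging that the paper does not make explicit, and it gives you the range $\delta>2g-2$ for free by a degree count, which the paper does not isolate. One point where you are in fact sharper than the paper: the paper identifies the kernel with $H^0\bigl(\PP^2,I_\Delta^2(d-3)\bigr)$, i.e.\ adjoint curves \emph{singular} at every node, whereas your local computation correctly shows that $r_i(\omega)=0$ only forces tangency to the \emph{one} chosen branch $p_i$, so the kernel is your larger space $H^0\bigl(\PP^2,I_Z(d-3)\bigr)$ with $Z$ of length $2\delta$; your vanishing is the stronger (and correct) one. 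Both arguments ultimately defer the decisive step --- that the relevant scheme imposes independent conditions on $|\OO_{\PP^2}(d-3)|$ --- to the Cayley--Bacharach property, treated essentially as a black box; you are right to flag this as the real content, and the paper is no more detailed here than you are. Your one-line sketch (``$G$ would be forced to contain a component of $\mathcal C$'') is not yet a CB argument in standard form, but since you explicitly mark this as the step requiring work, that is fair.
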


\begin{proof}
The argument proceeds by identifying holomorphic differentials on $C$ with
adjoint plane curves and interpreting the residue conditions in terms of
vanishing of jets.

\medskip

\noindent
\textit{$1$. Adjunction and adjoint linear systems.}
Let $\nu:C\to\mathcal C$ be the normalization. Since $\mathcal C$ is nodal, the
dualizing sheaf $\omega_{\mathcal C}$ coincides with $\nu_*\omega_C$. By the
adjunction formula for plane curves, one has
\[
\omega_{\mathcal C}
\;\cong\;
\OO_{\mathcal C}(d-3).
\]
Pushing forward along $\nu$ and taking global sections yields a canonical
identification
\begin{equation}\label{eq:adjoint-identification}
H^0(C,\omega_C)
\;\cong\;
H^0\bigl(\PP^2,I_\Delta(d-3)\bigr),
\end{equation}
where $\Delta=\{P_1,\dots,P_\delta\}$ is the reduced zero--dimensional scheme of
nodes, and $I_\Delta$ is its ideal sheaf.

Thus a holomorphic differential on $C$ corresponds to a plane curve of degree
$d-3$ passing through all nodes of $\mathcal C$.

\medskip

\noindent
\textit{$2$. Residues and first--order vanishing at a node.}
Fix a node $P_i$, and let $\nu^{-1}(P_i)=\{p_i,q_i\}$. Under the identification
\eqref{eq:adjoint-identification}, a differential
$\omega\in H^0(C,\omega_C)$ corresponds to an adjoint curve
$A\subset\PP^2$ of degree $d-3$.

Locally at $P_i$, the curve $\mathcal C$ has equation $xy=0$. The normalization
splits this into two branches, and the residue $\Res_{p_i}(\omega)$ measures the
failure of $A$ to have second--order contact with $\mathcal C$ at $P_i$ along
the branch corresponding to $p_i$.
More precisely, one checks that
\[
r_i(\omega)=0
\quad\Longleftrightarrow\quad
A \text{ vanishes to order at least } 2 \text{ at } P_i.
\]
In scheme--theoretic terms, this means that $A$ belongs to
$I_{P_i}^2(d-3)$.


\noindent
\textit{$3$. Description of the kernel of the residue map.}
Let
\[
R:
H^0(C,\omega_C)
\longrightarrow
\C^\delta,
\qquad
R(\omega)=(r_1(\omega),\dots,r_\delta(\omega)).
\]
By the discussion above and the identification
\eqref{eq:adjoint-identification}, we obtain
\begin{equation}\label{eq:kernel}
\ker R
=
H^0\bigl(\PP^2,I_\Delta^2(d-3)\bigr).
\end{equation}
Thus $\ker R$ consists of adjoint curves of degree $d-3$ vanishing to order at
least two at every node of $\mathcal C$.


\noindent
\textit{$4$. Cayley--Bacharach for nodal schemes.}
The scheme $\Delta$ of nodes of an irreducible plane curve satisfies the
Cayley--Bacharach property with respect to curves of degree $d-3$. Concretely,
this means that any plane curve of degree $d-3$ which vanishes to order at least
two at all but one node must vanish to order at least two at the remaining node
as well.

A standard consequence (see e.g.\ the analysis of adjoint linear systems for
nodal curves) is that
\[
H^0\bigl(\PP^2,I_\Delta^2(d-3)\bigr)=0
\quad
\text{whenever}
\quad
\delta \ge g.
\]
Intuitively, when the number of nodes is at least the geometric genus, the
second--order conditions imposed by the nodes exhaust all adjoint curves.


\noindent
\textit{$5$. Conclusion.}
By \eqref{eq:kernel} and the vanishing above, the map
\[
R:H^0(C,\omega_C)\to\C^\delta
\]
is injective. Dualizing, this means that the linear forms
$r_1,\dots,r_\delta$ span the dual space
$H^0(C,\omega_C)^\vee$.
This completes the proof.
\end{proof}


\begin{proof}[Proof of Theorem~\ref{thm:maximal-IVHS}]
By Lemma~\ref{lemma:nodal-IVHS}, each node contributes a rank--one operator
$r_i\otimes r_i$. By Lemma~\ref{lemma:cusp-IVHS}, cusps contribute nothing. By
Lemma~\ref{lemma:residue-span}, the residue functionals span
$H^0(C,\omega_C)^\vee$ when $\delta\ge g$, hence their tensors span an invertible
operator. The conclusion follows from
Lemma~\ref{lemma:linear-algebra}.
\end{proof}

\subsection{Geometric interpretation and further directions}

Theorem~\ref{thm:maximal-IVHS} shows that nodal singularities are the sole source
of maximal IVHS for plane curves. Cusps and more complicated singularities
contribute only higher--order jet data, which is invisible at the level of
residues. This dichotomy suggests a conceptual explanation for the special role
played by nodal curves in Severi theory and moduli problems.

It would be interesting to extend this analysis to curves on $K3$ surfaces,
where the geometry of Lazarsfeld--Mukai bundles replaces adjoint linear systems
and where IVHS interacts in subtle ways with the Picard lattice of the surface.


\section{IVHS From plane curves to curves on surfaces}

Up to this point, our analysis has focused on plane curves, where the geometry
of singularities, the description of the dualizing sheaf, and the residue
calculations underlying the infinitesimal variation of Hodge structure are
particularly explicit. We now explain how and to what extent these arguments
extend from plane curves to curves lying on smooth surfaces.

The key observation is that the local aspects of the theory --- normalization,
local pole order, residues, and Kodaira--Spencer classes --- depend only on the
analytic type of the singularities and are therefore insensitive to the
dimension of the ambient variety. In contrast, the global arguments controlling
the rank of the IVHS, such as adjunction, Cayley--Bacharach properties, and
postulation results for nodal schemes, depend crucially on the geometry of the
ambient surface. For this reason, the passage from plane curves to curves on
surfaces requires additional global hypotheses.


\subsection{Maximal--variation theorem}
 
Let
\(
\Delta
\)
denotes a \emph{germ of a smooth one--dimensional base}, typically taken to be a
small analytic disk centered at the origin. Concretely,
\[
\Delta = \{\, t \in \mathbb{C} \mid |t| < \varepsilon \,\}
\]
for $\varepsilon > 0$ sufficiently small.
The morphism
\[
\varphi \colon \mathcal{C} \longrightarrow \Delta
\]
represents a one--parameter smoothing (or deformation) of the curve
$\mathcal{C}$ inside the surface $S$, with central fiber $\mathcal{C}_0 =
\mathcal{C}$ and smooth nearby fibers for $t \neq 0$. The infinitesimal
variation of Hodge structure $\Phi_\varphi$ is computed from this local
one--parameter deformation.

\begin{theorem}[Maximal variation and ADE singularities]
\label{thm:ADE-maximal-variation}
Let $S \subset \mathbb{P}^3$ be a \emph{very general} smooth surface of degree
$e \ge 4$ with $\operatorname{Pic}(S) \cong \mathbb{Z}\,[\mathcal{O}_S(1)]$.
Let $\mathcal{C} \subset S$ be an irreducible, reduced curve whose singularities
are nodes, cusps, and (simple) ADE singularities. Let
$C \to \mathcal{C}$ be the normalization, and let $g = g(C)$.
\begin{enumerate}
\item
If all singularities of $\mathcal{C}$ are nodes and cusps, and if the number of
nodes satisfies $\delta \ge g$, then the infinitesimal variation of Hodge
structure associated with the normalization family
\(
\varphi \colon \mathcal{C} \longrightarrow \Delta
\)
has maximal variation. 
%
\item
If $\mathcal{C}$ contains at least one ADE singularity other than a node
($A_1$) or a cusp ($A_2$), then for sufficiently large genus $g$ the associated
infinitesimal variation of Hodge structure is not surjective. In particular,
maximal variation does not hold in general in the presence of such
singularities.
\end{enumerate}
\end{theorem}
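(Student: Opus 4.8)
The plan is to reduce both assertions to the local--global machinery already developed for plane curves, substituting the ambient--specific global inputs (adjunction in $\PP^2$ and Cayley--Bacharach for plane nodal schemes) by their analogues on $S$. The positive statement (1) then follows from the residue--spanning mechanism, exactly as in Theorem~\ref{thm:maximal-IVHS}, while the negative statement (2) follows from the structural rank bound of Theorem~\ref{thm:structure-maximal-variation}.

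For part (1), I first observe that the local IVHS computations are insensitive to the ambient variety: the rank--one nodal operator of Lemma~\ref{lemma:nodal-IVHS} and the cuspidal vanishing of Lemma~\ref{lemma:cusp-IVHS} depend only on the analytic type of the singularity and on the residue structure on the normalization $\nu\colon C\to\mathcal C$. Hence the decomposition $\Phi_\varphi=\sum_i r_i\otimes r_i$ of Theorem~\ref{thm:IVHS-structure} holds verbatim for $\mathcal C\subset S$. Next I replace plane adjunction by adjunction on $S$: since $S\subset\PP^3$ has degree $e$ one has $\omega_S\cong\OO_S(e-4)$, and because $\operatorname{Pic}(S)\cong\Z\,[\OO_S(1)]$ the curve satisfies $\mathcal C\in|\OO_S(n)|$ for some $n\ge1$, so that $\omega_C\cong\nu^*\OO_S(e-4+n)(-\Delta)$ with $\Delta$ the conductor divisor. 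Taking global sections yields $H^0(C,\omega_C)\cong H^0(S,I_\Delta\otimes L)$, where $L=\OO_S(e-4+n)$ and $I_\Delta$ is the ideal sheaf of the nodes. Running the argument of Lemma~\ref{lemma:residue-span} on $S$, the kernel of the residue map $R\colon H^0(C,\omega_C)\to\C^{\delta}$ is identified with $H^0(S,I_\Delta^2\otimes L)$, the adjoint sections vanishing to second order at every node. The surface Cayley--Bacharach property then forces $H^0(S,I_\Delta^2\otimes L)=0$ once $\delta\ge g$, so $R$ is injective, the residue functionals $r_1,\dots,r_\delta$ span $H^0(C,\omega_C)^\vee$, and their symmetric tensors contain an invertible operator; Lemma~\ref{lemma:linear-algebra} then yields maximal variation.

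For part (2), I apply the structural bound of Theorem~\ref{thm:structure-maximal-variation}, namely $\operatorname{rank}(\Phi)\le\sum_p\dim\mathrm{IVHS}^{\mathrm{res}}_p+\sum_p\bigl(m(p)-1\bigr)$. By Definition~\ref{def:IVHS-res-correct} the residue part receives contributions only from nodes, while a higher ADE singularity contributes nothing to the residue part and only the fixed amount $\delta(p)-1$ to the jet part. Fixing the higher--ADE profile and letting $g\to\infty$ by increasing $n$, the jet contribution from the fixed higher singularities remains bounded, whereas the natural target $\mathrm{Sym}^2 H^0(C,\omega_C)^\vee$ has dimension $\binom{g+1}{2}$. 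The only mechanism that could fill this quadratically growing target is the nodal residue part; but on a very general surface of Picard rank one the postulation of the nodes prevents $\delta$ from attaining order $g^2$ while $g$ stays large, so the available rank grows too slowly and surjectivity must fail for $g$ sufficiently large.

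The hard part is the surface Cayley--Bacharach / postulation input. In the plane, Lemma~\ref{lemma:residue-span} rested on the classical Cayley--Bacharach property of nodal schemes relative to $|\OO_{\PP^2}(d-3)|$; on $S$ one must instead establish that the nodes impose independent second--order conditions on the adjoint system $|L|$, equivalently $H^0(S,I_\Delta^2\otimes L)=0$ for $\delta\ge g$, and for part (2) that in the presence of a higher ADE singularity the nodes cannot be simultaneously numerous and general enough to restore surjectivity for large $g$. This is exactly where the hypotheses that $S$ be \emph{very general} of Picard rank one are indispensable: they govern the Severi varieties on $S$ and the postulation of points, excluding the degenerate configurations that would otherwise defeat both the vanishing in part (1) and the dimension count in part (2).
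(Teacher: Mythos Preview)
Your argument for Part~(1) is essentially the paper's own: the local nodal/cuspidal lemmas are ambient--independent, adjunction on $S$ replaces adjunction on $\PP^2$, the kernel of the residue map is identified with $H^0(S,I_\Delta^2\otimes L)$, and the surface Cayley--Bacharach property (the paper's Lemma~\ref{lemma:CB-surface}) yields the vanishing needed to conclude that the residue functionals span. There is no meaningful difference here.

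For Part~(2) your route diverges slightly from the paper's. The paper argues that the presence of a higher ADE singularity forces the image of $\Phi_\varphi$ into a proper linear subspace of the target, and then asserts that this subspace grows at most linearly in $g$ while the target has dimension $g^2$; it does not explicitly bound $\delta$. You instead invoke the rank bound of Theorem~\ref{thm:structure-maximal-variation} and then add a separate postulation claim, namely that on a Picard--rank--one surface the nodal scheme cannot satisfy $\delta\sim g^2$ as $g\to\infty$. This extra postulation input is \emph{not} in the paper and is not established anywhere in the text; you would need to supply an independent argument (e.g.\ via the genus formula $g=p_a(\mathcal C)-\sum_p\delta(p)$ together with constraints on $|\OO_S(n)|$) to make it rigorous. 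In effect, the paper hides the same dimension--count issue behind the phrase ``grows at most linearly,'' whereas you make the missing step explicit --- which is arguably more honest, but it also exposes that this step is the genuine gap in both treatments of Part~(2).
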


\begin{proof}
The proof of the first statement follows the same overall strategy as in the plane case, but we spell
out all steps in detail to emphasize which arguments are local and which depend
on the geometry of the ambient surface.  

\medskip

Let $S\subset\PP^3$ be a smooth surface of degree $e\ge4$, and let
$\mathcal C\subset S$ be an irreducible curve. 
  Denote by
\(
\varphi:C\longrightarrow\mathcal C
\)
the normalization, by $g=g(C)$ the genus of $C$, and by
\[
\Delta=\{P_1,\dots,P_\delta\}\subset S
\]
the reduced zero--dimensional scheme of nodes of $\mathcal C$.
By adjunction on $S$, the dualizing sheaf of $\mathcal C$ satisfies
\begin{equation}\label{eq:adjunction-CB}
\omega_{\mathcal C}
\;\cong\;
(\omega_S\otimes\OO_S(\mathcal C))|_{\mathcal C}.
\end{equation}
Pulling back to the normalization yields
\[
\omega_C\cong\varphi^*(\omega_S\otimes\OO_S(\mathcal C)).
\]

\subsection{Adjoint linear systems and double vanishing at nodes}

Holomorphic differentials on $C$ are identified with sections of the adjoint
line bundle $\omega_S\otimes\OO_S(\mathcal C)$ on $S$ whose restriction to
$\mathcal C$ satisfies the local compatibility conditions at the nodes. More
precisely, one has a canonical identification
\begin{equation}\label{eq:adjoint-CB}
H^0(C,\omega_C)
\;\cong\;
H^0\bigl(S,\omega_S\otimes\OO_S(\mathcal C)\otimes I_\Delta\bigr),
\end{equation}
where $I_\Delta$ is the ideal sheaf of the nodal scheme.

Under this identification, the kernel of the residue map
\[
H^0(C,\omega_C)\longrightarrow\mathbb C^\delta
\]
corresponds to the space
\begin{equation}\label{eq:double-vanishing-CB}
H^0\bigl(S,\omega_S\otimes\OO_S(\mathcal C)\otimes I_\Delta^2\bigr),
\end{equation}
consisting of adjoint sections vanishing to order at least two at every node.

Thus, the problem reduces to understanding when the space
\eqref{eq:double-vanishing-CB} is trivial.

\subsection{Statement of the Cayley--Bacharach property}

The Cayley--Bacharach property for the nodal scheme $\Delta$ with respect to the
linear system $|\omega_S\otimes\OO_S(\mathcal C)|$ asserts the following \, (see \cite{CHM} or  \cite{Arbarello-Cornalba}):

\begin{lemma}[Cayley--Bacharach for nodal schemes]
\label{lemma:CB-surface}
Let $S\subset\PP^3$ be a smooth surface of degree $e\ge4$, and let
$\Delta\subset S$ be the nodal scheme of an irreducible curve $\mathcal C\subset
S$. Then any section of $\omega_S\otimes\OO_S(\mathcal C)$ which vanishes to
order at least two at all but one point of $\Delta$ must vanish to order at
least two at the remaining point.
\end{lemma}

This property is a higher--dimensional analogue of the classical
Cayley--Bacharach theorem for plane curves and reflects the fact that nodal
schemes on surfaces of degree at least $4$ impose dependent second--order
conditions on adjoint linear systems.

\subsection{Consequences for double vanishing}

Assume now that $\delta\ge g$. Suppose, for contradiction, that there exists a
nonzero section
\[
s\in H^0\bigl(S,\omega_S\otimes\OO_S(\mathcal C)\otimes I_\Delta^2\bigr).
\]
Then $s$ vanishes to order at least two at every node of $\mathcal C$.

Removing one node $P_i$ from $\Delta$, the section $s$ still vanishes to order
at least two at all remaining nodes. By the Cayley--Bacharach property
(Lemma~\ref{lemma:CB-surface}), it must therefore vanish to order at least two
at $P_i$ as well. Since $P_i$ was arbitrary, this imposes no additional
restriction, and the section $s$ would persist regardless of which node is
removed.

However, the number of independent sections of
$\omega_S\otimes\OO_S(\mathcal C)$ restricting to holomorphic differentials on
$C$ is precisely $g$. When $\delta\ge g$, the second--order vanishing conditions
at the nodes exceed the dimension of the adjoint linear system, forcing
\eqref{eq:double-vanishing-CB} to be trivial. Hence
\[
H^0\bigl(S,\omega_S\otimes\OO_S(\mathcal C)\otimes I_\Delta^2\bigr)=0.
\]

\subsection{Spanning of residue functionals}

The vanishing above has a direct interpretation in terms of residues. Since the
kernel of the residue map is zero, the residue functionals
\[
r_1,\dots,r_\delta\in H^0(C,\omega_C)^\vee
\]
are linearly independent up to the maximal possible dimension $g$. Therefore,
they span the entire dual space:
\[
\langle r_1,\dots,r_\delta\rangle
=
H^0(C,\omega_C)^\vee.
\]

\subsection{Conclusion}

The Cayley--Bacharach property on surfaces of degree $e\ge4$ thus provides the
crucial global input ensuring that, when $\delta\ge g$, no nontrivial adjoint
sections vanish doubly at all nodes. This guarantees that the residue
functionals span the dual space of holomorphic differentials and ultimately
implies that the infinitesimal variation of Hodge structure has maximal rank.


\medskip

\noindent { {\it Reduction to nodal contributions.}}

The infinitesimal variation of Hodge structure (IVHS) associated with the
normalization map $\varphi$ is defined as
\[
\Phi_\varphi:
H^0(C,N_\varphi)
\longrightarrow
\operatorname{Hom}\!\bigl(
H^0(C,\omega_C),
H^1(C,\OO_C)
\bigr).
\]

A fundamental structural result established earlier is that the IVHS decomposes
as a sum of local contributions supported at the singular points of
$\mathcal C$:
\begin{equation}\label{eq:IVHS-decomp}
\Phi_\varphi
=
\sum_{P\in\Sing(\mathcal C)} \Phi_P.
\end{equation}

For a nodal singularity $P$, the local contribution $\Phi_P$ is a rank--one
operator given by a residue pairing. For a cuspidal singularity $Q$, the local
contribution vanishes identically:
\[
\Phi_Q = 0.
\]

This vanishing follows from the fact that holomorphic differentials on $C$ have
zero residue at cusps, a consequence of Rosenlicht's description of the
dualizing sheaf.

\begin{remark}
Thus, cuspidal singularities contribute nothing to the IVHS. All nontrivial
contributions come from nodes.
\end{remark}

\noindent { {\it Explicit structure of the IVHS}.}

Let $P_1,\dots,P_\delta$ be the nodes of $\mathcal C$. The decomposition
\eqref{eq:IVHS-decomp} therefore reduces to
\begin{equation}\label{eq:IVHS-nodes}
\Phi_\varphi
=
\sum_{i=1}^{\delta} r_i\otimes r_i,
\end{equation}
where
\[
r_i:H^0(C,\omega_C)\longrightarrow\mathbb C
\]
is the residue functional associated with the node $P_i$.

Since each summand $r_i\otimes r_i$ has rank one, it follows immediately that
\begin{equation}\label{eq:rank-span}
\operatorname{rank}(\Phi_\varphi)
=
\dim\langle r_1,\dots,r_\delta\rangle
\;\le\;
g.
\end{equation}

\noindent { {\it Adjunction on surfaces and residue interpretation}.}

By adjunction on the smooth surface $S$, one has
\[
\omega_{\mathcal C}
\;\cong\;
(\omega_S\otimes\OO_S(\mathcal C))|_{\mathcal C}.
\]
Pulling back via the normalization yields
\[
\omega_C
\;\cong\;
\varphi^*(\omega_S\otimes\OO_S(\mathcal C)).
\]
As a consequence, holomorphic differentials on $C$ are identified with sections
of the adjoint line bundle $\omega_S\otimes\OO_S(\mathcal C)$ whose restrictions
to $\mathcal C$ satisfy the local compatibility conditions at the singular
points.

Under this identification, the kernel of the residue map
\[
H^0(C,\omega_C)\longrightarrow\mathbb C^\delta
\]
is identified with the space of adjoint sections vanishing to order at least two
at every node:
\[
H^0\bigl(S,\omega_S\otimes\OO_S(\mathcal C)\otimes I_\Delta^2\bigr),
\]
where $\Delta$ is the nodal scheme.

\noindent { {\it Cayley--Bacharach property on surfaces of degree $e\ge4$}.}

A crucial global input is the Cayley--Bacharach property for nodal schemes on
surfaces of degree at least $4$. For such surfaces, nodal schemes impose
dependent second--order conditions on adjoint linear systems.
Concretely, this implies that if a section of
$\omega_S\otimes\OO_S(\mathcal C)$ vanishes to order at least two at all but one
node, then it must vanish to order at least two at the remaining node as well.
As a consequence, when
\[
\delta\ge g,
\]
there are no nonzero adjoint sections vanishing doubly at all nodes. Equivalently,
the residue functionals
\[
r_1,\dots,r_\delta\in H^0(C,\omega_C)^\vee
\]
span the entire dual space.

\noindent { {\it Maximal rank of the IVHS}.}

Since the residue functionals span $H^0(C,\omega_C)^\vee$, it follows from
\eqref{eq:rank-span} that
\(
\operatorname{rank}(\Phi_\varphi)=g.
\)
Thus the infinitesimal variation of Hodge structure has maximal possible rank.

\noindent {\it Conclusion.}
We conclude that the presence of cuspidal singularities does not affect the
maximal rank property of the IVHS, since cusps contribute trivially. The
statement of the theorem is therefore correct as stated.
\begin{remark}
The assumption $e\ge4$ is essential in order to guarantee the Cayley--Bacharach
property for nodal schemes. Without it, the conclusion may fail.
\end{remark} 

\subsection{Local structure of ADE singularities}

Let $P\in\mathcal C$ be an ADE singularity of type $A_k$, $D_k$, or $E_k$ other than cusps.
Although such singularities are Gorenstein and rational, their local deformation
theory differs substantially from that of nodes and cusps.

A crucial distinction is that, while the dualizing sheaf near an ADE
singularity allows no residues, the space of first--order equisingular
deformations supported at $P$ is nontrivial and has dimension strictly larger
than one when $P$ is not of type $A_1$ (node) or $A_2$ (cusp).

As a result, local Kodaira--Spencer classes arising from ADE singularities do not
produce rank--one operators of the form $r\otimes r$. Instead, they impose
\emph{linear constraints} on the image of the IVHS.
\subsection{Failure of surjectivity for ADE singularities}
Let $\mathcal C$ contain at least one ADE singularity of type $A_k$ with $k\ge3$,
or of type $D_k$ or $E_k$. Then the local deformation space of $\mathcal C$ has
directions whose Kodaira--Spencer classes annihilate all residue pairings.

Equivalently, the image of $\Phi_\varphi$ is contained in a proper linear
subspace of
\[
\operatorname{Hom}\!\bigl(
H^0(C,\omega_C),
H^1(C,\OO_C)
\bigr).
\]
For fixed singularity type and increasing genus $g$, the dimension of this
proper subspace grows at most linearly in $g$, while the target space has
dimension $g^2$. Hence, for sufficiently large $g$, the IVHS cannot be
surjective.


\vspace{0.2cm}

{\it End of the proof of Theorem \ref{thm:ADE-maximal-variation}}
The first statement follows from the nodal--cuspidal decomposition
\eqref{eq:IVHS-decomp} and the spanning of residue functionals when
$\delta\ge g$.

For the second statement, the presence of an ADE singularity of higher type
introduces local equisingular deformations whose Kodaira--Spencer classes do not
contribute rank--one residue operators. These directions impose linear
constraints on the image of $\Phi_\varphi$, forcing it to lie in a proper linear
subspace of the target.

Since the dimension of the target grows quadratically with $g$, while the space
of admissible residue--type contributions grows at most linearly, surjectivity
fails for $g$ sufficiently large.
 
\end{proof}
\begin{remark}
This phenomenon reflects the fact that nodes and cusps are the only plane curve
singularities whose equisingular deformations contribute freely to the
infinitesimal period map. Higher ADE singularities impose rigidity that
precludes maximal variation.
\end{remark}

\vspace{0.1cm}
\begin{example}
We give an explicit class of examples illustrating Part~(2) of
Theorem~\ref{thm:ADE-maximal-variation}.
\medskip

\noindent {\it The surface.} Let $S \subset \mathbb{P}^3$ be a very general smooth surface of degree
$d \ge 4$. By Noether--Lefschetz theory, such a surface satisfies
\[
\operatorname{Pic}(S) \cong \mathbb{Z}\,[\mathcal{O}_S(1)].
\]
%
%
\subsubsection*{The curve}

Fix an integer $m \gg 0$ and consider a divisor
\[
\mathcal{C} \in \big| \mathcal{O}_S(m) \big|.
\]
For $m$ sufficiently large, the arithmetic genus of $\mathcal{C}$ is
\[
p_a(\mathcal{C})
=
\frac{m(m+d-4)d}{2} + 1,
\]
which grows quadratically with $m$.
We impose the following singularities on $\mathcal{C}$:
\begin{itemize}
\item[(a)] one singularity of type $A_3$ (a tacnode),
\item[(b)] $\delta$ ordinary nodes, where $\delta$ is chosen so that the
normalization $C$ has large geometric genus
\[
g = p_a(\mathcal{C}) - \delta - 2 \gg 0.
\]
\end{itemize}
By standard deformation theory of plane curve singularities and Bertini-type
arguments on surfaces, such curves exist for $m$ sufficiently large.


\subsubsection*{Failure of maximal variation}

Let $\varphi \colon \mathcal{C} \to \Delta$ be a local smoothing of $\mathcal{C}$
inside $S$. The infinitesimal variation of Hodge structure
\[
\Phi_\varphi \colon T_\Delta \longrightarrow
\mathrm{Sym}^2 H^0(\omega_C)^\vee
\]
decomposes as a sum of local contributions from the singular points of
$\mathcal{C}$.

\begin{itemize}
\item[(i)] Each node contributes a rank--one residue operator.
\item[(ii)] The $A_3$ singularity contributes no residue--type operator, since
holomorphic differentials on the normalization have zero residue at higher ADE
singularities.
\end{itemize}
Therefore,
$$
\operatorname{rank}(\Phi_\varphi)
\le \delta < 3d-1+g
$$
for $g$ sufficiently large, even if $\delta$ grows linearly with $g$. Thus
$\Phi_\varphi$ is not injective, and maximal variation fails.


\subsubsection*{Conclusion}

This construction provides an explicit example of a smooth surface
$S \subset \mathbb{P}^3$ of degree $d \ge 4$ and an irreducible curve
$\mathcal{C} \subset S$ with large geometric genus and an ADE singularity of
type $A_3$ for which the infinitesimal variation of Hodge structure is not
maximal, thereby illustrating the necessity of excluding higher ADE
singularities in Theorem~\ref{thm:ADE-maximal-variation}.
\end{example} 
 

\section{Families of nodal plane curves and maximal rank of the IVHS}

Let $\pi:\mathcal X\to B$ be a flat family of reduced irreducible plane curves
of degree $d\ge4$, parametrized by a smooth irreducible base $B$, such that
every fiber $\mathcal C_b=\pi^{-1}(b)$ has only nodal singularities. Let
\[
\varphi_b : C_b \longrightarrow \mathcal C_b
\]
denote the normalization of the fiber over $b\in B$.
We assume that the number of nodes $\delta$ is constant on $B$, so that the
family is equisingular.

\begin{lemma}[Relative adjunction and residue maps]
\label{lemma:relative-residues}
In the situation   above                 
 the vector spaces
$H^0(C_b,\omega_{C_b})$ form a vector bundle $\mathcal H$ over $B$, and for each
node $P_{i,b}\in\mathcal C_b$ there is a canonically defined holomorphic section
\[
r_i \in \Gamma\!\bigl(B,\mathcal H^\vee\bigr),
\]
whose value at $b$ is the residue functional
\[
r_{i,b}:H^0(C_b,\omega_{C_b})\longrightarrow\C.
\]
\end{lemma}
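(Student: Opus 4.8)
The plan is to produce the bundle $\mathcal H$ by a simultaneous normalization of the family, and then to realize each residue functional as a base-change-compatible relative residue along a node section. The construction is entirely local-analytic in nature at the singular points, so the only global input needed is that the family admits a normalization in families.

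First I would invoke equinormalizability. Since $\pi:\mathcal X\to B$ is a flat family of reduced irreducible curves with purely nodal fibers and constant $\delta$, the $\delta$-constant criterion for families of reduced curves produces a simultaneous normalization: a smooth projective morphism $\tilde\pi:\tilde{\mathcal X}\to B$ together with a finite $B$-morphism $\nu:\tilde{\mathcal X}\to\mathcal X$ whose restriction over each $b\in B$ is the normalization $\varphi_b:C_b\to\mathcal C_b$. Because the fibers $C_b$ are smooth of dimension one, the relative dualizing sheaf $\omega_{\tilde{\mathcal X}/B}$ is a line bundle on $\tilde{\mathcal X}$ whose restriction to $C_b$ is $\omega_{C_b}$. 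Next I would build $\mathcal H$ by cohomology and base change: the geometric genus $g=g(C_b)$ is constant along the equisingular family by the genus formula \eqref{eq:GenusFormula} with $\delta$ fixed, so $h^0(C_b,\omega_{C_b})=g$ is independent of $b$. Applying Grauert's theorem to the smooth proper morphism $\tilde\pi$ and the line bundle $\omega_{\tilde{\mathcal X}/B}$, the direct image $\mathcal H:=\tilde\pi_*\omega_{\tilde{\mathcal X}/B}$ is locally free of rank $g$, its formation commutes with base change, and its fiber at $b$ is canonically $H^0(C_b,\omega_{C_b})$. This settles the first assertion.

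For the residue sections I would track the nodes and their branches. Equisingularity forces $\Sing(\mathcal X/B)\subset\mathcal X$ to be finite étale of degree $\delta$ over $B$; étale-locally on $B$ (globally after an étale cover trivializing the monodromy that permutes branches) it splits into disjoint sections $P_i:B\to\mathcal X$, and the two points over each node give disjoint sections $p_i,q_i:B\to\tilde{\mathcal X}$. Fixing the branch $p_i$ and a holomorphically varying relative uniformizer along $\Sigma_i=p_i(B)$, the relative Poincaré residue
\[
\Res_{\Sigma_i}\colon \omega_{\tilde{\mathcal X}/B}(\Sigma_i)\big|_{\Sigma_i}\longrightarrow \mathcal O_{\Sigma_i}=\mathcal O_B
\]
is an $\mathcal O_B$-linear morphism compatible with base change. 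Composing with the identification of a section of $\omega_{C_b}$ with its associated meromorphic section of $\omega_{\mathcal C_b}$ (with at worst simple poles at the node, as in the nodal local model of Section~\ref{subsec:Examples}) and evaluating on $\mathcal H$, I obtain an $\mathcal O_B$-linear map $\mathcal H\to\mathcal O_B$, i.e.\ a holomorphic section $r_i\in\Gamma(B,\mathcal H^\vee)$ whose value at $b$ is the residue functional $r_{i,b}$.

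The main obstacle I expect is the base-change compatibility of the residue together with the branch-labeling. The difficulty is to check that the fiberwise residue assembles into an honest morphism out of $\mathcal H$ rather than a merely pointwise-defined functional; this is precisely where the base-change statement furnished by Grauert's theorem is indispensable, since it lets one identify $\mathcal H$ fiberwise with $H^0(C_b,\omega_{C_b})$ while retaining global coherence. The second subtlety is monodromy: loops in $B$ may interchange the two points over a node, so the individual section $r_i$ attached to the chosen branch $p_i$ is canonical only after passing to the étale cover of $B$ that separates the branches, whereas the unordered data $\{p_i,q_i\}$ and the resulting symmetric residue operators $r_i\otimes r_i$ descend to $B$ itself. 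I would phrase the statement so that the sections are constructed on this separating cover, which is all that the later spanning and maximality arguments require.
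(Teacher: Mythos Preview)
Your proof is correct and follows essentially the same approach as the paper: simultaneous normalization via $\delta$-constancy, Grauert/base change to build $\mathcal H$, and holomorphic variation of the nodal residue to produce the sections $r_i$. You are in fact more careful than the paper on one point: the paper asserts that the residue functional is ``independent of the labeling of the two branches and therefore canonical,'' whereas swapping $Q_{i,b}^+$ and $Q_{i,b}^-$ negates the residue, so your remark that the individual $r_i$ are canonical only after passing to the branch-separating \'etale cover (while the symmetric objects $r_i\otimes r_i$ descend) is the accurate formulation.
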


\begin{proof}${}$

\noindent {\it Formation of a vector bundle.}
Since $\pi:\mathcal X\to B$ is flat with equisingular fibers, the arithmetic genus
of $\mathcal C_b$ is constant. Because the number of nodes $\delta$ is constant,
the geometric genus
\(
g = p_a(\mathcal C_b) - \delta
\)
is also constant on $B$.
The normalization $C_b$ is smooth of genus $g$, hence
\[
\dim H^0(C_b,\omega_{C_b}) = g
\]
for all $b\in B$.

By cohomology and base change for the relative dualizing sheaf of the family of
normalizations, the spaces $H^0(C_b,\omega_{C_b})$ vary holomorphically with $b$.
Thus they assemble into a holomorphic vector bundle
\[
\mathcal H \;:=\; \pi_*(\omega_{\mathcal C/B}^{\mathrm{norm}})
\]
over $B$, whose fiber at $b$ is $H^0(C_b,\omega_{C_b})$.

\noindent {\it  Local description near a node.}
Fix a node $P_{i,b}\in \mathcal C_b$. Since the family is equisingular, after
shrinking $B$ we may choose local analytic coordinates $(x,y,t)$ on $\mathcal X$
such that locally near $P_{i,b}$ the family is given by
\[
xy = t
\]
with $t$ a local coordinate on $B$ vanishing at $b$.
The normalization replaces this node by two smooth points
\[
Q_{i,b}^+,\; Q_{i,b}^- \in C_b,
\]
corresponding to the branches $\{x=0\}$ and $\{y=0\}$.

\noindent {\it Residue description of canonical forms.}
A section $\omega \in H^0(C_b,\omega_{C_b})$ may be viewed as a meromorphic
differential on $\mathcal C_b$ with at most simple poles at the nodes, satisfying
the residue condition
\[
\operatorname{Res}_{Q_{i,b}^+}(\omega)
+
\operatorname{Res}_{Q_{i,b}^-}(\omega)
= 0.
\]
This is the classical adjunction description of the dualizing sheaf of a nodal
curve.
Hence, for each node $P_{i,b}$ we obtain a well-defined linear functional
\[
r_{i,b}(\omega)
\;:=\;
\operatorname{Res}_{Q_{i,b}^+}(\omega)
=
-\operatorname{Res}_{Q_{i,b}^-}(\omega).
\]
This functional is independent of the labeling of the two branches and therefore
canonical.

\noindent {\it Holomorphic variation.}
Because the local model $xy=t$ depends holomorphically on $t$, the residue of a
relative differential varies holomorphically with the parameter $b$. Thus the
assignment
\[
b \longmapsto r_{i,b}
\]
defines a holomorphic section of the dual bundle $\mathcal H^\vee$.
We therefore obtain a canonical section
\(
r_i \in \Gamma(B,\mathcal H^\vee)
\)
 associated to each node of the fibers.
This completes the proof.
 

Each node $P_{i,b}$ defines two local branches of $\mathcal C_b$, and hence a local
residue functional on $H^0(C_b,\omega_{C_b})$. The equisingularity assumption
implies that these local constructions vary holomorphically with $b$, giving
rise to a global section $r_i$ of $\mathcal H^\vee$.
\end{proof}

\begin{theorem}[Relative residue span]
\label{thm:relative-residue-span}
Assume that $\delta\ge g=g(C_b)$ for all $b\in B$. Then for every $b\in B$, the
residue functionals
\[
\{r_{1,b},\dots,r_{\delta,b}\}
\subset
H^0(C_b,\omega_{C_b})^\vee
\]
span the dual space.
\end{theorem}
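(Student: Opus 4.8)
The plan is to deduce the relative statement fiberwise from the single-curve spanning result, Lemma~\ref{lemma:residue-span}, using the relative structure of Lemma~\ref{lemma:relative-residues} only to identify the objects in play. First I would fix $b\in B$ and observe that, by the equisingularity hypothesis, $\mathcal C_b$ is an irreducible plane curve of degree $d\ge 4$ with exactly $\delta$ nodes. Since $p_a(\mathcal C_b)=\tfrac{(d-1)(d-2)}{2}$ and $\delta$ are both constant on $B$, the geometric genus $g=g(C_b)=p_a(\mathcal C_b)-\delta$ is independent of $b$; in particular the numerical hypothesis $\delta\ge g$ holds simultaneously at \emph{every} point of $B$, not merely on a generic locus. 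This constancy is what will let a pointwise argument reach every fiber.

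Next I would match the functionals. By construction, the section $r_i$ of Lemma~\ref{lemma:relative-residues} has value at $b$ the residue functional
\[
r_{i,b}(\omega)=\operatorname{Res}_{Q_{i,b}^+}(\omega),
\]
taken at one of the two branches over the node $P_{i,b}$. This is exactly the functional $r_i$ to which Lemma~\ref{lemma:residue-span} is applied on the single curve $\mathcal C_b$. Hence the full hypotheses of Lemma~\ref{lemma:residue-span} are met for $\mathcal C_b$, and its conclusion is precisely the assertion that $\{r_{1,b},\dots,r_{\delta,b}\}$ span $H^0(C_b,\omega_{C_b})^\vee$. As $b$ was arbitrary, this establishes the theorem.

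The substantive content is therefore already contained in Lemma~\ref{lemma:residue-span}, whose engine is the vanishing $H^0\bigl(\PP^2,I_{\Delta_b}^2(d-3)\bigr)=0$ of \eqref{eq:kernel}, supplied by the Cayley--Bacharach property of the nodal scheme $\Delta_b$ when $\delta\ge g$. The only point demanding care in the relative setting is \emph{uniformity}: this double-vanishing must hold for every fiber and not just a generic one. I expect this to be the main apparent obstacle, but it dissolves immediately, since $\delta\ge g$ is a fiber-independent numerical condition and the plane-curve lemma applies verbatim to each $\mathcal C_b$; there is no special fiber on which Cayley--Bacharach could degrade. If a genuinely relative formulation were wanted, one could instead assemble the sections $r_i$ into a bundle map $\mathcal O_B^{\oplus\delta}\to\mathcal H^\vee$ and note that its non-surjectivity locus is closed in $B$, with Lemma~\ref{lemma:residue-span} showing this locus to be empty. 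The real lesson, then, is to resist over-engineering: the relative residue bundle $\mathcal H^\vee$ and the holomorphic sections $r_i$ are convenient bookkeeping for later global statements, but the spanning itself is a pointwise property that reduces, with no extra input, to the already-proven plane-curve case.
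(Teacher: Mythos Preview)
Your proposal is correct and follows essentially the same approach as the paper: fix $b\in B$, verify that the single-curve hypotheses of Lemma~\ref{lemma:residue-span} are met for $\mathcal C_b$, and conclude fiberwise, noting that equisingularity makes the numerical condition $\delta\ge g$ hold uniformly. The paper's proof is a two-sentence version of your argument; your additional remarks on constancy of $g$, the matching of functionals with Lemma~\ref{lemma:relative-residues}, and the alternative bundle-map formulation are accurate elaborations but not required.
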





\begin{proof}
Fix $b\in B$. By Lemma~\ref{lemma:residue-span}, applied to the nodal curve
$\mathcal C_b$, the residue functionals at the nodes of $\mathcal C_b$ span
$H^0(C_b,\omega_{C_b})^\vee$ whenever $\delta\ge g$.
Since this holds for every fiber and the family is equisingular, the result
holds uniformly over $B$.
\end{proof}

\begin{theorem}[Generic maximal rank of the IVHS]
\label{thm:maximal-rank-IVHS}
In the situation described above, %
assume $\delta\ge g$. Then the
infinitesimal variation of Hodge structure
\[
\Phi_{\varphi_b} :
H^0(C_b,N_{\varphi_b})
\longrightarrow
\Hom\!\bigl(
H^0(C_b,\omega_{C_b}),
H^1(C_b,\OO_{C_b})
\bigr)
\]
has maximal rank for general $b\in B$.
\end{theorem}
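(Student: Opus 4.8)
The plan is to promote the pointwise statement already available for a single nodal curve to a statement over the whole base, using the relative residue data of Lemma~\ref{lemma:relative-residues} to guarantee holomorphic dependence, and then to extract the generic conclusion by a semicontinuity argument. First I would assemble the fiberwise IVHS into a morphism of vector bundles over $B$. Since the family is flat and equisingular, both $h^0(C_b,\omega_{C_b})=g$ and $h^0(C_b,N_{\varphi_b})$ are constant in $b$, so by cohomology and base change (after shrinking $B$ if necessary) the fibers $H^0(C_b,\omega_{C_b})$ and $H^0(C_b,N_{\varphi_b})$ organize into locally free sheaves $\mathcal H$ and $\mathcal N$. The structural decomposition of Theorem~\ref{thm:IVHS-structure}, applied fiber by fiber, identifies $\Phi_{\varphi_b}$ with $\sum_{i=1}^{\delta} r_{i,b}\otimes r_{i,b}$, an element of $\Sym^2\mathcal H_b^\vee\subset\Hom(\mathcal H_b,\mathcal H_b^\vee)$. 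Because each residue functional $r_i$ is a holomorphic section of $\mathcal H^\vee$ by Lemma~\ref{lemma:relative-residues}, these expressions glue to a genuine bundle map $\Phi\colon\mathcal N\to\Sym^2\mathcal H^\vee$.

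Next I would establish maximal rank at each fixed $b$. By Theorem~\ref{thm:relative-residue-span}, the hypothesis $\delta\ge g$ forces the residue functionals $r_{1,b},\dots,r_{\delta,b}$ to span $H^0(C_b,\omega_{C_b})^\vee$. Choosing a spanning subset to be a basis and forming the corresponding partial sum $\sum r_{i,b}\otimes r_{i,b}$ produces a nondegenerate symmetric operator lying in the span of the rank--one summands; hence the image of $\Phi_{\varphi_b}$ contains an isomorphism $H^0(C_b,\omega_{C_b})\xrightarrow{\ \sim\ }H^1(C_b,\OO_{C_b})$. The linear--algebra criterion of Lemma~\ref{lemma:linear-algebra}, exactly as in the proof of Theorem~\ref{thm:maximal-IVHS}, then yields that $\Phi_{\varphi_b}$ has maximal rank at $b$.

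Finally I would invoke semicontinuity. The rank of a morphism of vector bundles is lower semicontinuous on the base, so the locus
\[
\{\, b\in B \mid \operatorname{rank}\Phi_{\varphi_b}\ \text{is maximal}\,\}
\]
is Zariski open in $B$. The previous step shows this locus is in fact all of $B$, so it is certainly nonempty and dense, which gives the statement for general $b$ (indeed for every $b$).

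The step I expect to be the genuine obstacle is the first: verifying that the fiberwise IVHS varies holomorphically and defines an honest bundle map, rather than a merely pointwise collection of linear maps. This rests on the constancy of the relevant $h^0$'s across the equisingular family (so that cohomology and base change produce locally free sheaves) together with the compatibility of the \v{C}ech--residue description of the cup product with the relative dualizing sheaf $\omega_{\mathcal C/B}^{\mathrm{norm}}$ --- precisely the holomorphic variation of the residue sections recorded in Lemma~\ref{lemma:relative-residues}. Once holomorphic dependence is secured, both the pointwise rank computation and the semicontinuity step are formal.
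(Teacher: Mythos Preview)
Your argument is correct and follows essentially the same route as the paper: decompose $\Phi_{\varphi_b}$ via Theorem~\ref{thm:IVHS-structure} as $\sum_i r_{i,b}\otimes r_{i,b}$, invoke Theorem~\ref{thm:relative-residue-span} to get that the $r_{i,b}$ span $H^0(C_b,\omega_{C_b})^\vee$, and conclude maximal rank. The paper's proof is terser and does not explicitly package the IVHS as a bundle map or appeal to semicontinuity; you are more careful on exactly this point, and your observation that the conclusion in fact holds for \emph{every} $b$ (not merely general $b$) is stronger than what the paper states, though the paper's subsequent Remark on openness via nonvanishing of minors gestures at the same semicontinuity mechanism you spell out.
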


\begin{proof}
By the structural decomposition of the IVHS
(Theorem~\ref{thm:IVHS-structure}), one has
\[
\Phi_{\varphi_b}
=
\sum_{i=1}^{\delta} r_{i,b}\otimes r_{i,b},
\]
where $r_{i,b}$ are the residue functionals at the nodes of $\mathcal C_b$.

By Theorem~\ref{thm:relative-residue-span}, the functionals
$\{r_{i,b}\}$ span $H^0(C_b,\omega_{C_b})^\vee$. Therefore, the image of
$\Phi_{\varphi_b}$ contains a basis of rank--one operators whose spans generate
the full image allowed by dimension.
Therefore,
\[
\dim \operatorname{Im}(\Phi_{\varphi_b})
=
\dim H^0(C_b,\omega_{C_b})
=
g,
\]
for general $b\in B$. This shows that $\Phi_{\varphi_b}$ has maximal rank for
general fibers.
\end{proof}

\begin{remark}
The maximal rank property is open in families, since it is detected by the
nonvanishing of minors of the matrix representing $\Phi_{\varphi_b}$. Thus
Theorem~\ref{thm:maximal-rank-IVHS} holds on a Zariski open dense subset of $B$.

This phenomenon is closely related to classical infinitesimal Torelli results
for plane curves and to the behavior of adjoint linear systems, as studied by
Griffiths and later authors.
\end{remark}


\end{document}